\documentclass[11pt]{article}
\usepackage{amssymb,latexsym,amsfonts,verbatim,amscd}
\usepackage{amsmath}
\usepackage[all]{xy}

\newtheorem{Def}{Definition}[section]
\newtheorem{Thm}[Def]{Theorem}
\newtheorem{Lem}[Def]{Lemma}
\newtheorem{Prop}[Def]{Proposition}
\newtheorem{Ex}[Def]{Example}
\newtheorem{Cor}[Def]{Corollary}
\newtheorem{Fac}[Def]{Fact}

\font\nat msbm10 scaled\magstephalf
\def\N{\hbox{\nat\char78}}

\font\mata=msam10 
\def\restr{\mbox{\mata\char22}}
\def\telos{\hfill$\dashv$}

\begin{document}

\title{The magmatic universe revisited: we define ordered pairs, relations, numbers and  a special form of Separation}

\author{Athanassios Tzouvaras}

\date{}
\maketitle

\begin{center}
Department  of Mathematics\\  Aristotle University of Thessaloniki \\
541 24 Thessaloniki, Greece \\
 e-mail: \verb"tzouvara@math.auth.gr"
\end{center}

\begin{abstract}
This is a companion article to \cite{Tz24}. We address the following two questions: 1) Can we define  in the magmatic universe $M$ of \cite{Tz24} counterparts, or just analogues, of some very basic set-theoretic objects which are missing from $M$, specifically  ordered pairs, binary relations, especially functions, as well as  natural and ordinal  numbers? 2) Are there restricted  forms of the Separation and, perhaps,  Replacement schemes that hold in $M$? We show the following: 1) Magmatic analogues of  ordered pairs can indeed be defined by means of certain magmas called ``magmatic pairs''. However when we use them to generate relations and especially functions, some  unsurmountable problems come up. These problems are due to the peculiarity of the elements of magmas to be distinguished into ``intended'' and ``collateral'' ones, a distinction due to their inherent relation of dependence. So magmatic functions are defined under very special conditions. 2) A certain class of formulas, called ``magmatic formulas'' is isolated, and  the scheme of Separation  restricted to these formulas, called ``Magmatic Separation Scheme'' (MSS), is proven to hold in $M$. On the other hand Replacement fails badly, and this is due to its functional form.

\end{abstract}

{\em Mathematics Subject Classification (2020)}: 00A69, 06A06

\vskip 0.2in

{\em Keywords:} Castoriadis' magma, magmatic universe,  intended and collateral elements of magmas, magmatic ordered pair, relation and function, magmatic class, magmatic Separation.

\section{Introduction}

\subsection{The content of the paper}

The magmatic universe $M$ that was constructed in \cite{Tz24} as a subuniverse of $V(A)$, the universe of ZF with a set of atoms $A$,  in order to capture  formally some aspects  Castoriadis' notion of magmas as exposed in \cite{Ca87} and \cite{Ca89}, has significant differences from the universe of sets. An important such difference is that it contains {\em no finite} sets (including the  empty set). As a consequence $M$ does not contain usual ordered pairs $\langle x,y\rangle:=\{\{x\},\{x,y\}\}$, and hence cartesian products as well as relations (binary and $n$-ary), and in particular functions, since all these  mathematical entities  are constructed exclusively by the help  of ordered pairs. So a natural technical challenge would be to find out in $M$ magmatic analogues, or magmatic counterparts,   of the aforementioned  basic objects, primarily of ordered pairs, and then by the use of them magmatic products, relations,  functions and natural and ordinal numbers.  That would help us to explore further the universe $M$, and  understand what is like to live not in the familiar set-theoretic ``separative'' world  of independent entities, but  in a ``non-separative'' world of dependent inseparable entities.

In this companion paper to \cite{Tz24} we set out to address  mainly this project. The starting point is the observation that the magmatic analogues of the singletons   $\{x\}$,  for $x\in M$, are the basic open sets $pr(x)$ of $M$. This allows us to define for any $x,y\in M$, a magma $\langle\langle x,y\rangle\rangle\in M$ which satisfies the basic condition for pairs, namely  $\langle\langle x,y\rangle\rangle=\langle\langle x',y'\rangle\rangle$ if and only if $x=x'$ and $y=y'$. $\langle\langle x,y\rangle\rangle$ is the magmatic ordered pair of $x,y$. However, in contrast to set-theoretic pairs, for any pair $\langle\langle x,y\rangle\rangle$ there are infinitely many sub-pairs $\langle\langle x',y'\rangle\rangle\subseteq \langle\langle x,y\rangle\rangle$. This causes serious complications to the definition of products, and especially relations and functions, as magmas consisting of magmatic pairs, and leads to the distinction of elements of a magma into ``intended'' and ``collateral'' ones.  Namely, whenever we want a relation $\textbf{R}$ to contain some specific pair   $\langle\langle x,y\rangle\rangle$, to which we refer to as ``intended'', as a consequence of dependence  $\textbf{R}$ must contain also every $\langle\langle x',y'\rangle\rangle\subseteq \langle\langle x,y\rangle\rangle$ (i.e. every pair depended to $\langle\langle x,y\rangle\rangle$, since $\subseteq$ is the relation that captures dependence in $M$.) We refer to every such pair $\langle\langle x',y'\rangle\rangle$ which depends on the intended $\langle\langle x,y\rangle\rangle$ as a  ``collateral'' one. The complication and  non-perspicuousness in the  definition of relations is due exactly to the occurrence of a plethora of collateral pairs along  with any intended one. The problem becomes more   acute when we come to define magmatic functions, that is  magmatic relations with the usual unique output  condition for each input. Such relations do not exist in $M$ with the present definition of magmatic pair, except only under a modified definition and special conditions. A solution would be to invent a  definition of $\langle\langle x,y \rangle\rangle$ in $M$ according to which   there are no proper collateral sub-pairs.  But I don't believe that  such a definition is  attainable.

Further we show how natural and ordinal numbers can be defined in $M$, and also that the Separation scheme holds in $M$ for a special class of formulas called ``magmatic''.

The content of the paper is as follows:

In section 2 we recall some technical facts from \cite{Tz24} that will be important for the rest of the paper.

In section 3 we introduce the  magmatic ordered pairs and prove the basic facts about them.

In section 4 we deal with magmatic products, which are the analogues of cartesian  products, and for the first time we come across the crucial role played by the  ``collateral'' elements in distinction of the ``intended'' elements of magmas. Moreover here we introduce the  magmatic  (binary) relations and functions, in particular, and discuss the problems we face  with them.

In section 5 we define magmatic natural and ordinal  functions, and give some examples of how can be used in combination with functions.

In section 6 we show that a natural restricted form of the Separation Scheme, the Magmatic Separation Scheme (MSS), that employs only  ``magmatic'' formulas is true in $M$. On the other hand, it is shown that the Replacement axiom fails irreparably, as a consequence of its functional character.

\subsection{A brief overview of \cite{Tz24}}

In the rest of this Introduction we give, for the reader's convenience,  a brief  overview of magmas and the magmatic universe as presented in \cite{Tz24}. Let ${\rm ZFA}$ be the theory  ${\rm ZF}$ with a set $A$ of  atoms  (or urelements) and let $V(A)$  denote its universe built on $A$. In \cite{Tz24} we introduced and studied the ``magmatic universe'' $M(A)$ (or, more precisely, $M(A,\preccurlyeq)$, since the definition of $M(A)$ depends heavily on a preordering $\preccurlyeq$ of the set $A$  of atoms). In order for $\preccurlyeq$ to simulate more faithfully the dependence relation that occurs  amongst elements of collections which C. Castoriadis  \cite{Ca87} and \cite{Ca89} refers to   as ``magmas'', we assume that $\preccurlyeq$ has the  extra property not to contain minimal elements, that is for every $a\in A$ there is a $b$ such that $b\prec a$ (which means $b\preccurlyeq a$ and $a\not\preccurlyeq b$).

Given the preordering $\preccurlyeq$ of $A$, for every $a\in A$ let $pr(a)=\{b\in A:b\preccurlyeq a\}$ be the set of predecessors of $a$. The sets $pr(a)$ are the basis of a natural topology on $A$, called the {\em lower topology}, that is, a $x\subseteq A$ is ``lower open'', or just open, if for every $a\in x$, $pr(a)\subseteq x$.  Let $LO(A,\preccurlyeq)$ be the set of lower open subsets of $A$. Due to the non-minimality condition imposed to $\preccurlyeq$, it follows easily that (a) all sets in $LO(A,\preccurlyeq)$ are infinite,  and (b) there is no $\subseteq$-minimal set in $LO(A,\preccurlyeq)$. The elements of $LO(A,\preccurlyeq)$ constitute the magmas of the first (bottom) level of the universe of magmas. For we can shift the preordering $\preccurlyeq$ to the powerset of $A$, and then, successively, to all higher powersets as follows. For $x,y\subseteq A$, let
$$x \preccurlyeq^+y:\Leftrightarrow (\forall a\in x)(\exists b\in y)(a \preccurlyeq b).$$
It is easy to see that $\preccurlyeq^+$ is a preordering on ${\cal P}(A)$. Moreover if $\preccurlyeq$ does not contain minimal elements, then so does $\preccurlyeq^+$. Now if  we  let
$$pr^+(x)=\{y\subseteq A: y\preccurlyeq^+ x\},$$
then the following remarkable relation holds between the sets $pr^+(x)$ and the powersets of $x$.

\begin{Prop} \label{P:connection}
{\rm (\cite[Prop. 3.2]{Tz24})}

(i) For any $x,y\in {\cal P}(A)$, $y\subseteq x \Rightarrow  y\preccurlyeq^+ x$, so  ${\cal P}(x)\subseteq pr^+(x)$.

(ii) If $x\in LO(A,\preccurlyeq)$, then the converse of (i) holds, i.e., for every $y\in {\cal P}(A)$, $y\preccurlyeq^+ x \Rightarrow  y\subseteq x$, so  $pr^+(x)\subseteq {\cal P}(x)$.

(iii) Therefore for every  $x\in LO(A,\preccurlyeq)$,   $pr^+(x)={\cal P}(x)$.

(iv) In particular, $\preccurlyeq^+\restr LO(A,\preccurlyeq)=\subseteq$, and
$$LO(LO(A,\preccurlyeq), \preccurlyeq^+)=LO(LO(A,\preccurlyeq), \subseteq).$$
\end{Prop}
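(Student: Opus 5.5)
The plan is to argue directly from the definitions of $\preccurlyeq^+$, $pr^+$ and lower openness, taking the four parts in order. Part (iii) will follow from (i) and (ii), and part (iv) from (iii).

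For (i), let $y\subseteq x$ and $a\in y$. Then $a\in x$, and reflexivity of $\preccurlyeq$ gives $a\preccurlyeq a$, so $b=a$ witnesses $(\exists b\in x)(a\preccurlyeq b)$. Hence $y\preccurlyeq^+ x$. Since every $y\in{\cal P}(x)$ is a subset of $A$ with $y\subseteq x$, this shows ${\cal P}(x)\subseteq pr^+(x)$.

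For (ii), let $x$ be lower open and suppose $y\preccurlyeq^+ x$. Take $a\in y$ and choose $b\in x$ with $a\preccurlyeq b$. Then $a\in pr(b)$, and lower openness of $x$ gives $pr(b)\subseteq x$, so $a\in x$. Thus $y\subseteq x$, and therefore $pr^+(x)\subseteq{\cal P}(x)$. This is the only step that actually uses the hypothesis on $x$; the rest is routine. Part (iii) is then the conjunction of (i) and (ii).

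For (iv), let $x,y\in LO(A,\preccurlyeq)$. Part (i) gives $y\subseteq x\Rightarrow y\preccurlyeq^+x$, and part (ii), applied to the lower open $x$, gives the converse. So $\preccurlyeq^+$ and $\subseteq$ coincide on $LO(A,\preccurlyeq)$.

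The displayed equality then says that the lower topologies on $LO(A,\preccurlyeq)$ induced by the two relations agree. On this set the lower topology is defined from the predecessor sets $\{y\in LO(A,\preccurlyeq): y\preccurlyeq^+ x\}$ and $\{y\in LO(A,\preccurlyeq): y\subseteq x\}$. We have just shown these sets are identical for each $x$, so the two notions of ``open'' coincide and the two families $LO(\cdot,\cdot)$ are equal. The one point needing care is to read $LO(LO(A,\preccurlyeq),\preccurlyeq^+)$ with $\preccurlyeq^+$ restricted to $LO(A,\preccurlyeq)$, so that predecessors are taken inside $LO(A,\preccurlyeq)$ rather than in all of ${\cal P}(A)$.
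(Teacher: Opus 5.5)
Your proof is correct: reflexivity of $\preccurlyeq$ gives (i), lower openness of $x$ gives (ii), and (iv) follows because the two relations coincide on $LO(A,\preccurlyeq)$, so the predecessor sets and hence the lower open sets coincide. This paper only quotes the result from \cite{Tz24} without proof, and your direct verification from the definitions, including the care about taking predecessors inside $LO(A,\preccurlyeq)$, is the routine argument it rests on.
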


It follows from clause (iv) above that if we set $M_1=LO(A,\preccurlyeq)$, then $LO(M_1,\preccurlyeq^+)=LO(M_1,\subseteq)$. And for the same reason,  if we set $M_2=LO(M_1,\subseteq)$, the  shifting $\subseteq^+$ of the relation $\subseteq$ of $M_1$ to the sets of $M_2$ is $\subseteq$ again.  And so on with $\subseteq^{++}$, $\subseteq^{+++}$ etc,  for all subsequent levels $M_3$, $M_4,\ldots$, which are defined similarly. Moreover, if $\alpha$ is  a limit ordinal, we can take as $M_\alpha$ just the union of all previous  levels, ordered again by $\subseteq$. And then we can proceed further by setting   $M_{\alpha+1}=LO(M_\alpha,\subseteq)$. This way   $M_\alpha$ is defined for every ordinal $\alpha\geq 1$.

In view of the above facts, the magmatic universe  $M(A)$ is defined in $V(A)$ as follows.

\vskip 0.1in

$M_1=LO(A,\preccurlyeq)$.

$M_{\alpha+1}=LO(M_\alpha,\subseteq)$, for every $\alpha\geq 1$.

$M_\alpha=\bigcup_{1\leq \beta<\alpha}M_\beta$, if $\alpha$  is a limit ordinal.

$M=M(A)=\bigcup_{\alpha\geq 1}M_\alpha$.

\vskip 0.1in

The next Proposition gives  some basic  facts about $M_\alpha$'s and $M$.

\begin{Prop} \label{P:basics}
{\rm (\cite[Lemma 4.2]{Tz24})}

(i) $M_\alpha\subseteq V_\alpha(A)$, $M_1\subseteq {\cal P}(A)\backslash \{\emptyset\}$,  $M_{\alpha+1}\subseteq {\cal P}(M_\alpha)\backslash \{\emptyset\}$ and  $M_\alpha\in M_{\alpha+1}$, for every $\alpha\geq 1$.

(ii) The b.o. sets of the space $M_{\alpha+1}=LO(M_\alpha,\subseteq)$, for $\alpha\geq 1$,  are the sets
$$pr_\alpha(x)=\{y\in M_\alpha:y\subseteq  x\}={\cal P}(x)\cap M_\alpha.$$
Therefore:
$$x\in M_{\alpha+1} \ \Leftrightarrow \ x\subseteq M_\alpha \wedge x\neq\emptyset \wedge (\forall y\in x)({\cal P}(y)\cap M_\alpha\subseteq x).$$

(iii) The class $M$ is ``almost'' transitive, in the sense that for every $x\in M_\alpha$ for $\alpha\geq 2$, $x\subseteq M$. However for $x\in M_1$,  $x\subseteq A$. So $M\cup A$ is transitive.

(iv) $M$ is a proper subclass of $V(A)$, while $M\cap V=\emptyset$, where $V$ is  the subclass of pure sets of $V(A)$ (that is of $x$'s such that $TC(x)\cap A=\emptyset$).

(v) All sets of $M$ are infinite.

(vi) There is no $\subseteq$-minimal set in $M$.
\end{Prop}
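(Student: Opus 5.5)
The plan is to prove all clauses together by transfinite induction on $\alpha\geq 1$, with Proposition \ref{P:connection} as the base tool. The underlying observation is that a lower-open set $x$ in a preordered set without minimal elements is nonempty only if it is infinite. Indeed, pick $a\in x$. Non-minimality gives a strictly descending chain $a\succ a_1\succ a_2\succ\cdots$. Lower openness puts every $a_n$ in $x$, and strictness makes them pairwise distinct.

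\emph{Clause (i).} By definition $M_1=LO(A,\preccurlyeq)\subseteq{\cal P}(A)$. We exclude $\emptyset$ by the convention that magmas are nonempty, i.e.\ $LO$ collects nonempty lower open sets, which is how the characterization in (ii) is phrased. The inclusion $M_\alpha\subseteq V_\alpha(A)$ follows by induction, since $M_{\alpha+1}\subseteq{\cal P}(M_\alpha)\subseteq{\cal P}(V_\alpha(A))=V_{\alpha+1}(A)$, and limits are unions. For $M_\alpha\in M_{\alpha+1}$, note that $M_\alpha$ is a nonempty subset of itself and is trivially downward closed under $\subseteq$ within $M_\alpha$.

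\emph{Clause (ii).} For $\alpha\geq1$ the preorder on $M_\alpha$ is $\subseteq$. Hence the basic open set of $y$ is $\{z\in M_\alpha: z\subseteq y\}={\cal P}(y)\cap M_\alpha$. Unfolding the definition of lower openness then gives the displayed equivalence.

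\emph{Clauses (iii) and (iv).} Almost-transitivity is immediate from (i): for $\alpha\geq2$, an element of $M_\alpha$ lies in some $M_{\beta+1}$, hence is a subset of $M_\beta\subseteq M$. Elements of $M_1$ are subsets of $A$, so $M\cup A$ is transitive. For $M\cap V=\emptyset$, induct: every $x\in M$ is nonempty and contains either atoms or members of $M$, so $TC(x)$ meets $A$. That $M$ is proper in $V(A)$ is clear, because for instance $\emptyset\notin M$.

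\emph{Clause (v).} This is the main step. For $M_1$ it is the observation above. For $M_{\alpha+1}$, first check that $(M_\alpha,\subseteq)$ has no minimal elements; that is clause (vi) at level $\alpha$, obtained inductively. Given that, the same chain argument applies in $(M_\alpha,\subseteq)$: from $y\in x$ take $y\supsetneq y_1\supsetneq\cdots$ in $M_\alpha$, all of which lie in $x$.

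\emph{Clause (vi).} We prove it jointly with (v), which is the expected obstacle. At level 1, given $x\in M_1$ and $a\in x$, choose $b\prec a$. Then $pr(b)\subsetneq x$, since $a\notin pr(b)$, and $pr(b)\in M_1$. At level $\alpha+1$, given $x$ and $y\in x$, use the inductive hypothesis to find $y'\in M_\alpha$ with $y'\subsetneq y$. Then ${\cal P}(y')\cap M_\alpha$ belongs to $M_{\alpha+1}$, is contained in $x$, and omits $y$, so it is a proper subset of $x$. Limit levels are unions, so nothing new is needed there. Finally, to rule out a minimal element across different levels, use (ii): any strictly smaller element found this way lies in the same level, which suffices.
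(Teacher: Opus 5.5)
Your treatment of (v) and (vi) is sound. The witness ${\cal P}(y')\cap M_\alpha$ stays in the same level, so ``$(M_\alpha,\subseteq)$ has no minimal elements'' is the right induction hypothesis. Working directly with $\subseteq$ is legitimate, rather than passing through $\preccurlyeq^+$ and Proposition \ref{P:connection} as the overview suggests, because $M_{\alpha+1}$ is defined as $LO(M_\alpha,\subseteq)$. The paper itself only cites this result, so there is no proof here to compare against. The step that fails is in (iii). You say that an element of $M_\alpha$, for $\alpha\geq 2$, lies in some $M_{\beta+1}$ and is therefore a subset of $M_\beta\subseteq M$. That needs $\beta\geq 1$, which is guaranteed only when $\alpha$ is a successor. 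At a limit $\alpha$ we have $M_1\subseteq M_\alpha$ by definition, and the clause as literally stated is then false. For example, $A$ is a nonempty lower open set, so $A\in M_1\subseteq M_\omega$, yet $A\not\subseteq M$ because $A\cap M=\emptyset$; the same holds for every $x\in M_1$. The correct statement is that every $x\in M\setminus M_1$ is a subset of $M$, and this is the version used later in Lemma \ref{L:legitim}(ii). To prove it, note that limit levels add no new elements, so such an $x$ lies in some $M_{\beta+1}$ with $\beta\geq 1$, and hence $x\subseteq M_\beta\subseteq M$. Transitivity of $M\cup A$ then follows as you say.

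A smaller gap: your infinite chains $a\succ a_1\succ a_2\succ\cdots$ and $y\supsetneq y_1\supsetneq\cdots$ are obtained by Dependent Choice, which ${\rm ZFA}$ does not provide. The conclusion of (v) holds without choice. A finite nonempty preordered set has a minimal element, by induction on its size. Since $x$ is lower open, an element minimal in $x$ is minimal in the whole space: every $b\prec a$ lies in $pr(a)\subseteq x$, and at higher levels every $y'\in M_\alpha$ with $y'\subsetneq y$ lies in ${\cal P}(y)\cap M_\alpha\subseteq x$. This contradicts non-minimality of $\preccurlyeq$, respectively (vi) at level $\alpha$. Two further loose ends remain. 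First, $M_\alpha\in M_{\alpha+1}$ requires $M_\alpha\neq\emptyset$, which you assert without argument. It follows from $A\neq\emptyset$ (so $A\in M_1$) and then from the clause itself by induction, since limit levels are unions. Second, if ``proper subclass'' in (iv) is meant as ``$M$ is a proper class'', then $\emptyset\notin M$ is not the relevant observation. Instead, $M_\alpha\in M_{\alpha+1}$ gives $\mathrm{rank}(M_\alpha)\geq\alpha$, so $M$ contains elements of unbounded rank.
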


In \cite{Tz24} magmas were introduced primarily in order to address Castoriadis' concerns about collections that, seemingly, cannot be accommodated within the standard cantorian universe, and also capture his intuitions about basic properties of such collections. Nevertheless the magmatic hierarchy/universe $M=M(A)=\bigcup_{\alpha\geq 1}M_\alpha$  seems to have a more general interest, or an interest in its own, in the following sense. It constitutes a hierarchical totality of collections which differs from the corresponding set-universe $V(A)$ only with respect to the {\em inseparability} of its elements, that is, their inability to split to arbitrary sub-collections. Of course this causes the failure of many axioms of ${\rm ZFC}$ (Pairing, Union, Separation, etc), since the purpose of most of them is exactly to ensure the existential independence of sets and their  greatest possible diversity.

\section{The consequences  of dependence on the behavior of magmas}
The dependence of elements has a strong impact on magmas and it is what makes their behavior   diverge significantly from that of sets. The main point of  divergence is the lack of {\em perspicuity} and {\em determinateness} concerning magmas' elements. Practically this means that we can never be fully aware of what {\em exactly} a magma contains. For example  given any prescribed magmas $x_1,\ldots,x_n$, we can  define other  magmas that contain them, and moreover  a {\em smallest} one, namely $pr(x_1)\cup\cdots\cup pr(x_n)$, however even this collection never contains {\em exactly} $x_1,\ldots,x_n$. It contains also an infinity  of other magmas, all being submagmas of $x_i$, for the properties of which  we cannot tell anything precise. This is in sharp contrast to what happens with sets. One of the fundamental mechanisms there for  set construction is the Separation axiom, which states  that for every property $\phi(x)$ and every pre-existed set $X$ there is set $Y$ which contains {\em exactly} the elements of $X$ that satisfy $\phi(x)$.   Assuming that we have a clear idea what the  objects satisfying  $\phi(x)$ look like, as well as what the objects of $X$ are, it follows that the elements of $Y$ are fully determined. Nothing analogous  holds for $M$. The basic properties $\phi(x,x_1,\ldots,x_n)$ of $L=\{\in\}$, with parameters $x_1,\ldots,x_n$, which define magmas inside $M$ are those defining $pr(x_1)\cup\cdots\cup pr(x_n)=({\cal P}(x_1)\cup\ldots\cup {\cal P}(x_n))\cap M$, that is
$$\phi(x,x_1,\ldots,x_n):=\bigvee_{i=1}^nx\subseteq x_i.\footnote{In connection with the lack of perspicuity and unclarity of the elements of magmas mentioned above,   I find it quite remarkable and intriguing  that the formulas $\phi(x,x_1,\ldots,x_n)$ define in $M$ {\em infinite powersets} (from the point of view of ${\rm ZF}$), since powersets of infinite sets have notoriously been characterized  as ``inherently vague'' collections by many researchers (among them  S. Feferman, N. Weaver, H. Field, and others).  }$$
Nevertheless, the indeterminateness and unclarity of element-hood of magmas, as they are represented in the  formal framework of \cite{Tz24},  shows  that the proposed formalization was rather successful. In fact  the reason that  Castoriadis developed and elaborated  the concept and ``logic'' of magmas was exactly to bring to fore  qualities of thinking (logic) and collections which lie at the antipodes of  full determination and full  clarity, qualities which, he believed, are lurking everywhere around, but cannot be captured and understood by what he called ``Western thought''.

Therefore it should come as no surprise the fact that the objects of $M$ behave so differently from those of a model of ${\rm ZF}$. The question is whether there are any interesting {\em mathematical} facts that happen inside $M$. We already talked about  the general unclarity that characterizes elements of magmas. However the problem becomes much more acute if we attempt to define {\em relations} in $M$, especially functions. As is well-known the only so far widely accepted method to define mathematically these entities is by means of {\em ordered pairs}. So a first question is whether ``magmatic pairs'', as analogues of the usual ones, can be naturally defined in $M$. The answer is yes. We shall see in the next section that we can  define {\em magmatic analogues} of ordered pairs in a satisfactory way, namely magmas $\langle\langle x,y\rangle\rangle$ for any $x,y\in M$, having the basic property of pairs, that is,
$$\langle\langle x,y\rangle\rangle=\langle\langle x',y'\rangle\rangle\Rightarrow x=x' \ \& \ y=y'.$$
Nevertheless we shall also see that this is not sufficient in order to define faithfully  binary relations, and especially functions,  imitating the usual definitions of the corresponding ${\rm ZF}$-objects.

\subsection{Some preliminary technical facts about magmas}
The  basic idea behind the  construction in $M$ of  magmatic counterparts of standard ${\rm  ZFA}$ objects, like ordered pairs, is to use in place of singletons $\{x\}$ and, more generally, finite sets $\{x_1,\ldots,x_n\}$, which do not exist in $M$,  the corresponding {\em smallest} magmas that contain them.  For example, for every  $a\in A$  the smallest magma $u$ such that $a\in u$ is the b.o. set $pr(a)$ of predecessors of $a$, and more generally, for any finite $\{a_1,\ldots,a_n\}\subseteq A$, the smallest magma that contains all $a_i$ is $pr(a_1)\cup\cdots\cup pr(a_n)$. However, a  problem seems to be, as follows from Proposition \ref{P:basics} (ii), that for each $x\in M$ there is an array $pr_\alpha(x)$ of such elementary sets, one for each $\alpha$ such that $x\in M_\alpha$. (Notice that that's not the case for atoms  $a\in A$, since $A\cap M=\emptyset$). However all these b.o. sets are {\em identical}, as follows from Corollary \ref{C:identical} below.

\begin{Prop} \label{P:powrank}
{\rm (\cite{Tz24}, Prop. 4.5, Cor. 4.6)}

(i) If $x\in M_1$, then ${\cal P}(x)\cap M\subseteq M_1$, and hence ${\cal P}(x)\cap M={\cal P}(x)\cap M_1$.

(ii) If $x\in M_{\alpha+1}$, then ${\cal P}(x)\cap M\subseteq M_{\alpha+1}$, for every $\alpha\geq 1$, and hence  ${\cal P}(x)\cap M={\cal P}(x)\cap M_{\alpha+1}=pr_{\alpha+1}(x)$.

(iii) If $\alpha<\beta$ are limit ordinals and $x\in M_\alpha$, then $pr_\alpha(x)=pr_\beta(x)$.

(iv) For every $\alpha\geq 0$, if $x\in M_{\alpha+1}$ then ${\cal P}(x)\cap M\in M_{\alpha+2}$.

(v) If $x\in M$ and $x\subseteq M_\alpha$, then $x\in M_{\alpha+1}$  (that is,  every subset of $M_\alpha$ that belongs to $M$, is an open subset of $M_\alpha$).
\end{Prop}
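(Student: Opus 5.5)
The key tool is the characterization in Proposition \ref{P:basics}(ii): $x\in M_{\alpha+1}$ iff $x\subseteq M_\alpha$, $x\neq\emptyset$ and $x$ is closed downward under $\subseteq$ inside $M_\alpha$. Each clause then reduces to a rank argument, combined with the fact that downward closure of $x$ passes to its submagmas.

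\emph{Clause (i).} Let $x\in M_1$ and $y\in{\cal P}(x)\cap M$. Then $y\subseteq x\subseteq A$. Any $z\in M_\alpha$ with $\alpha\geq 2$ is a nonempty subset of $M$, by Proposition \ref{P:basics}(i),(iii), and $M\cap A=\emptyset$. Hence every element of $M$ having an atom as an element lies in $M_1$. Since $y$ is nonempty (all sets of $M$ are infinite), $y\in M_1$.

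\emph{Clause (ii).} Induct on $\alpha$. Take $x\in M_{\alpha+1}$ and $y\in M$ with $y\subseteq x$. Then $y\subseteq M_\alpha$, and I must show that $y$ is $\subseteq$-downward closed within $M_\alpha$. This is the main obstacle: $y$ is only known to be a magma at \emph{some} level $M_\gamma$, so it is closed downward only relative to $M_\gamma$'s notion of openness. I would argue as follows. Since $y\subseteq M_\alpha\subseteq M$ and $y\in M$, the level at which $y$ is born can be taken as $\gamma+1$ with $y\in LO(M_\gamma,\subseteq)$. Every $z\in y$ lies in $M_\alpha$; for $z\in y$ and $w\in M_\alpha$ with $w\subseteq z$, I need $w\in y$. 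Using the induction hypothesis applied to $z$ (or clause (i) when $z\in M_1$), ${\cal P}(z)\cap M\subseteq$ the level of $z$, which lies below both $\alpha$ and $\gamma$ as appropriate. So $w\in M_\gamma$, and downward closure of $y$ in $M_\gamma$ gives $w\in y$. Limit stages are handled because the $M_\beta$ are increasing unions, via Proposition \ref{P:basics}(i). The identity ${\cal P}(x)\cap M={\cal P}(x)\cap M_{\alpha+1}=pr_{\alpha+1}(x)$ then follows from Proposition \ref{P:basics}(ii).

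\emph{Clause (iii).} If $x\in M_\alpha$ with $\alpha$ a limit, then $x\in M_{\delta+1}$ for some $\delta+1<\alpha$. By (ii), every $y\in M$ with $y\subseteq x$ already lies in $M_{\delta+1}\subseteq M_\alpha\subseteq M_\beta$. So $pr_\alpha(x)=pr_\beta(x)={\cal P}(x)\cap M$.

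\emph{Clause (iv).} Put $p={\cal P}(x)\cap M$. By (i)/(ii), $p\subseteq M_{\alpha+1}$, and $p$ is nonempty since $x\in p$. It is downward closed in $M_{\alpha+1}$ by transitivity of $\subseteq$. Hence $p\in LO(M_{\alpha+1},\subseteq)=M_{\alpha+2}$.

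\emph{Clause (v).} If $x\in M$ and $x\subseteq M_\alpha$, I reuse the argument of (ii). With $x\in LO(M_\gamma,\subseteq)$, each element $z\in x$ lies in $M_\alpha$, and all its $M$-subsets stay at $z$'s own level by (i)/(ii)/(iii). So downward closure in $M_\gamma$ transfers to downward closure in $M_\alpha$, giving $x\in M_{\alpha+1}$. In fact I would prove (v) first, as a lemma, and derive (ii) from it, since (ii) is exactly the case where $x\subseteq M_\alpha$ is automatic.
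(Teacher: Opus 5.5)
Clause (i) is fine, and (iii) and (iv) follow correctly from (i) and (ii). One small slip in (i): ``any $z\in M_\alpha$ with $\alpha\geq 2$ is a subset of $M$'' holds only for successor $\alpha$, since $M_1\subseteq M_\omega$; the conclusion is unaffected.

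\textbf{The gap in (ii).} It sits exactly at the step you flag as the main obstacle. From ${\cal P}(z)\cap M\subseteq$ ``the level of $z$'', a level lying below $\gamma$, you conclude $w\in M_\gamma$. This presupposes that the hierarchy is cumulative, as does your remark that ``the $M_\beta$ are increasing''. It is not cumulative. Proposition \ref{P:basics}(i) gives $M_\alpha\in M_{\alpha+1}$, not $M_\alpha\subseteq M_{\alpha+1}$, and in fact $M_{m+1}\cap M_{n+1}=\emptyset$ for $m\neq n$ (see the proof of Fact \ref{F:disjoint}). Only at limit stages is containment of earlier levels automatic. So when $\gamma$ is a successor, knowing that the $M$-subsets of $z$ lie in $z$'s birth level does not give $w\in M_\gamma$.

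To get $w\in M_\gamma$ you must apply the statement to $z$ at level $\gamma$ itself. Your induction on $\alpha$ does not provide this when $\gamma>\alpha$. Choosing $\gamma$ minimal does not help, because $\gamma\leq\alpha$ is essentially what you are proving, and a magma really can lie in far-apart successor levels: every element of $M_n$, $n\geq 2$, also lies in $M_{\omega+1}$. Your argument does work when $\gamma\leq\alpha$ (apply the hypothesis at level $\gamma$, not at $z$'s birth level) and when $\gamma$ is a limit (then $M_\alpha\subseteq M_\gamma$). The case $\gamma=\epsilon+1>\alpha$ is left open. The same defect affects (v). Moreover (v) is circular as you organize it: you want to prove it first, but its argument invokes (i)--(iii).

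\textbf{The fix.} Make the statement uniform in the level and induct on the magma rather than on the level index. Prove
$$(\ast)\quad \mbox{for every } z\in M \mbox{ and every } \beta \mbox{ with } z\in M_\beta,\ \ {\cal P}(z)\cap M\subseteq M_\beta$$
by $\in$-induction on $z$.
\begin{itemize}
\item A limit $\beta$ reduces to a successor level (or $M_1$) below $\beta$.
\item $\beta=1$ is your argument for (i).
\item Let $z\in M_{\epsilon+1}$ with $\epsilon\geq 1$, and let $w\in M$, $w\subseteq z$. Then $w\subseteq M_\epsilon$; pick any $\zeta$ with $w\in LO(M_\zeta,\subseteq)$. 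For $u\in w$ and $v\in M_\epsilon$ with $v\subseteq u$, we have $u\in z\cap M_\zeta$. So $(\ast)$ for $u$ at level $\zeta$ gives $v\in M_\zeta$, hence $v\in w$ by openness of $w$ in $M_\zeta$. Thus $w\in M_{\epsilon+1}$.
\end{itemize}
Now (i), (ii) and (iii) are instances of $(\ast)$. Clause (v) follows by the same transfer: apply $(\ast)$ to the elements of $x$ at a level where $x$ is open. Your (iv) then goes through verbatim.

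\textbf{Alternative fix.} You can keep the induction on $\alpha$ and treat the case $\gamma=\epsilon+1>\alpha$ separately. There $w\in M_\alpha$, so $w$ is open in some $M_\eta$ with $\eta<\alpha$. By the induction hypothesis, every $M$-subset of an element of $w$ stays in $M_\eta$, and hence lies in $w$. So $w$ is also open in $M_\epsilon$. Since $w\subseteq z\subseteq M_\epsilon$, this gives $w\in M_{\epsilon+1}=M_\gamma$, and then $w\in y$.
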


Let  $rank_M(x)$ denote the {\em magmatic rank} of $x$, i.e., $rank_M(x)=\min\{\alpha:x\in M_{\alpha+1}\}$.

\begin{Cor} \label{C:identical}
Let $x\in M$ and let $rank_M(x)=\alpha$. Then for every $\beta\geq \alpha+1$ such that $x\in M_\beta$, $pr_\beta(x)=pr_{\alpha+1}(x)={\cal P}(x)\cap M$.
\end{Cor}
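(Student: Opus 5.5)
The plan is to show that every such $pr_\beta(x)$ equals ${\cal P}(x)\cap M$. Since $pr_{\alpha+1}(x)$ is the case $\beta=\alpha+1$, the middle equality then follows at once. By Proposition \ref{P:basics}(ii), $pr_\beta(x)={\cal P}(x)\cap M_\beta$ whenever $x\in M_\beta$, and $M_\beta\subseteq M$, so we always have $pr_\beta(x)\subseteq {\cal P}(x)\cap M$. The work is therefore the reverse inclusion ${\cal P}(x)\cap M\subseteq M_\beta$.

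First I would show that $x\in M_{\alpha+1}$ gives ${\cal P}(x)\cap M\subseteq M_{\alpha+1}$. If $\alpha=0$ this is Proposition \ref{P:powrank}(i). If $\alpha=\gamma+1$ is a successor, then $x\in M_{\gamma+2}=M_{(\gamma+1)+1}$ and Proposition \ref{P:powrank}(ii) applies. If $\alpha$ is a limit, Proposition \ref{P:powrank}(ii) applies again, taking the ordinal there to be $\alpha$ itself, since $\alpha\geq 1$ and $x\in M_{\alpha+1}$. So in every case ${\cal P}(x)\cap M\subseteq M_{\alpha+1}$.

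It remains to check that the levels increase, i.e.\ $M_{\alpha+1}\subseteq M_\beta$ for $\beta\geq\alpha+1$. This gives ${\cal P}(x)\cap M\subseteq M_\beta$ and closes the argument. Monotonicity is implicit in the construction but not stated explicitly, so I would prove $M_\gamma\subseteq M_{\gamma+1}$ by induction on $\gamma$; limit stages are immediate because they are unions.

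\begin{itemize}
\item For $\gamma=1$: take $u\in M_1$. Every element of $u$ is an atom, which is not in $M_1$, so $u\not\subseteq M_1$ and $u\notin M_2$. Thus $M_1\not\subseteq M_2$ literally, and the induction must start at $\gamma\geq 2$. This causes no harm, because the case $\alpha=0$ of the statement concerns $x\in M_1$. There, whenever $x\in M_\beta$ we have $M_1\subseteq M_\beta$, since $\beta\geq 1$ and the $M_1$ part sits inside limit unions. In any case ${\cal P}(x)\cap M\subseteq M_1$ and ${\cal P}(x)\cap M\subseteq {\cal P}(A)$, and no element of $M_\beta$ for successor $\beta\geq 2$ is a subset of $A$. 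So for $\alpha=0$ the relevant $\beta$ are $1$ and limits, where $M_1\subseteq M_\beta$ holds.
\item For $\gamma\geq 2$: if $y\in M_\gamma$, then $y\subseteq M$ is nonempty, by Proposition \ref{P:basics}(iii). By the induction hypothesis $y\subseteq M_\gamma$, so Proposition \ref{P:powrank}(v) gives $y\in M_{\gamma+1}$.
\end{itemize}

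In the limit case one also uses Proposition \ref{P:powrank}(iii) for consistency.

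I expect the main obstacle to be exactly this bookkeeping about which levels contain which. In particular, the bottom level $M_1$ is not literally included in $M_2$, and the argument has to handle that. The core of the proof, by contrast, is a direct invocation of Proposition \ref{P:powrank}.
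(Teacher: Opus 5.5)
Your first step is correct: Proposition~\ref{P:powrank}(i)/(ii) applied to $x\in M_{\alpha+1}$ gives ${\cal P}(x)\cap M\subseteq M_{\alpha+1}$. Your handling of $\alpha=0$ is also correct. The gap is the second step, the claim that $M_{\alpha+1}\subseteq M_\beta$ for all $\beta\geq\alpha+1$, which you derive from $M_\gamma\subseteq M_{\gamma+1}$ for $\gamma\geq 2$.

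That monotonicity is false at every level. Take $\gamma=2$. Every $y\in M_2$ is a nonempty subset of $M_1$. But $M_1\cap M_2=\emptyset$, since members of $M_1$ contain atoms and members of $M_2$ contain only sets. So no $y\in M_2$ is a subset of $M_2$, i.e.\ $M_2\cap M_3=\emptyset$. In fact the finite levels are pairwise disjoint, which the paper itself uses in the proof of Fact~\ref{F:disjoint}. Your induction breaks exactly at its base: the step for $\gamma=2$ needs the hypothesis $M_1\subseteq M_2$, which you had just refuted. It also fails at limits. There $M_1\subseteq M_\gamma$ while $M_1\cap M_{\gamma+1}=\emptyset$, so your appeal to Proposition~\ref{P:basics}(iii) at limit $\gamma$ is not justified either. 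Using $M_\delta\in M_{\delta+1}$, an induction shows $M_\gamma\not\subseteq M_{\gamma+1}$ for every $\gamma\geq 1$. The hierarchy is not cumulative, so ``$M_{\alpha+1}\subseteq M_\beta$'' cannot be the mechanism.

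The repair is to use the hypothesis $x\in M_\beta$, which your argument never exploits. Split on the form of $\beta$:
\begin{itemize}
\item If $\beta=\gamma+1$ is a successor, apply Proposition~\ref{P:powrank}(i) or (ii) to $x\in M_{\gamma+1}$ itself. This gives ${\cal P}(x)\cap M\subseteq M_\beta$ directly, hence $pr_\beta(x)={\cal P}(x)\cap M$. The same holds with $\beta=\alpha+1$.
\item If $\beta$ is a limit, then $M_{\alpha+1}\subseteq M_\beta$ does hold, simply because $M_\beta$ is by definition the union of the earlier levels. Your first step then finishes the argument.
\end{itemize}
This two-case split is exactly the paper's proof.
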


{\em Proof.} Let $\beta\geq \alpha+1$ and $x\in M_\beta$. If $\beta$ is successor, say $\beta=\gamma+1$, then by clause (ii) of \ref{P:powrank}, $pr_\beta(x)=pr_{\gamma+1}(x)={\cal P}(x)\cap M_{\gamma+1}={\cal P}(x)\cap M$. But also for the same reason $pr_{\alpha+1}(x)={\cal P}(x)\cap M$, hence $pr_\beta(x)=pr_{\alpha+1}(x)$. If now $\beta$ is limit, then $M_{\alpha+1}\subseteq M_\beta$, therefore $pr_{\alpha+1}(x)\subseteq pr_\beta(x)={\cal P}(x)\cap M_\beta\subseteq {\cal P}(x)\cap M$. But already $pr_{\alpha+1}(x)={\cal P}(x)\cap M$, so $pr_\beta(x)=pr_{\alpha+1}(x)={\cal P}(x)\cap M$. \telos

\vskip 0.2in

In view of Corollary \ref{C:identical}, we henceforth drop, as a rule,  the subscripts from $pr_\alpha(x)$ and write simply $pr(x)$ for every  $x\in M$, where $pr(x)$ always denotes $pr_{rank(x)+1}(x)$, or according to \ref{C:identical},
\begin{equation} \label{E:relPow}
pr(x)={\cal P}(x)\cap M.
\end{equation}
Also below we shall often use iterates of the operation $pr(\cdot)$, writing $pr^2(x)$ instead of $pr(pr(x))$, and in general $pr^n(x)$, $n\in \N$, for the $n$-th iterate.

\begin{Lem} \label{L:injective}
The following hold for the operation $pr:A\cup M\rightarrow M$:

(i) $pr$ is injective on $M$, i.e., $pr(x)=pr(y) \Rightarrow x=y$, for all $x,y\in M$.

(ii) $pr$ is an embedding  with respect to $\subseteq$,  i.e., $x\subseteq y \Leftrightarrow pr(x)\subseteq pr(y)$.

(iii) For any $a\in A$ and $x\in M$, $pr(a)\neq pr(x)$.

(iv) For $a,b\in A$, $pr(a)=pr(b)\Leftrightarrow a\sim b$, where $a\sim b$ denotes the equivalence $a\preccurlyeq b \wedge b\preccurlyeq a$.
\end{Lem}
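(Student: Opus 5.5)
The plan is to derive everything from the description $pr(x)={\cal P}(x)\cap M$ for $x\in M$ given in (\ref{E:relPow}), together with $pr(a)=\{b\in A:b\preccurlyeq a\}$ for atoms. The first step is the observation that $x\in pr(x)$ for every $x\in M$, since $x\subseteq x$ and $x\in M$. Likewise $a\in pr(a)$ for $a\in A$, by reflexivity of $\preccurlyeq$.

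For (ii), assume first $x\subseteq y$ with $x,y\in M$. If $z\in pr(x)$, then $z\in M$ and $z\subseteq x\subseteq y$, so $z\in pr(y)$. Conversely, assume $pr(x)\subseteq pr(y)$. Then $x\in pr(x)\subseteq pr(y)$, so $x\subseteq y$. Part (i) then follows from (ii) by antisymmetry of $\subseteq$: $pr(x)=pr(y)$ gives both inclusions $x\subseteq y$ and $y\subseteq x$.

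For (iii), I would compare the elements of the two sets. Every element of $pr(a)$ is an atom, while every element of $pr(x)$ lies in $M$. By Proposition \ref{P:basics}(iv)/(iii) we have $A\cap M=\emptyset$; atoms are not sets, and every member of $M$ is a nonempty set of atoms or of magmas. Moreover both $pr(a)$ and $pr(x)$ are nonempty, since they contain $a$ and $x$ respectively. Hence they are nonempty sets with disjoint kinds of elements, so they cannot be equal. Strictly speaking, only nonemptiness of one of them is needed.

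For (iv), if $a\sim b$ then transitivity of $\preccurlyeq$ gives $pr(a)=pr(b)$: any $c\preccurlyeq a$ satisfies $c\preccurlyeq b$, and symmetrically. Conversely, if $pr(a)=pr(b)$, then $a\in pr(a)=pr(b)$ gives $a\preccurlyeq b$, and symmetrically $b\preccurlyeq a$.

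None of these steps is a genuine obstacle. The only point needing care is (iii): one must justify that an atom is never an element of $M$. This is exactly $A\cap M=\emptyset$, which holds because members of $M$ are nonempty sets and atoms have no elements.
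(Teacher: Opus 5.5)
Your proposal is correct and follows essentially the paper's proof. Both arguments rest on the same ingredients: $pr(x)={\cal P}(x)\cap M$ together with $x\in pr(x)$ for (i)--(ii), the facts $pr(a)\subseteq A$, $pr(x)\subseteq M$ and $A\cap M=\emptyset$ for (iii), and reflexivity and transitivity of $\preccurlyeq$ for (iv). The only differences are cosmetic: you derive (i) from (ii) instead of directly, and in (iii) you state the nonemptiness of $pr(a)$ and $pr(x)$ explicitly, which the paper's argument uses without saying so.
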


{\em Proof.} (i) Let $x,y\in M$. By (\ref{E:relPow}) $pr(x)=pr(y)$ means ${\cal P}(x)\cap M={\cal P}(y)\cap M$, and hence $x\in {\cal P}(y)$ and $y\in {\cal P}(x)$. It follows  $x\subseteq y$ and $y\subseteq x$, that is $x=y$.

(ii) If $x\subseteq y$, by  (\ref{E:relPow}) we have  immediately that  $pr(x)\subseteq pr(y)$.  Conversely, if $pr(x)\subseteq pr(y)$ then $x\in pr(y)$ since $x\in pr(x)$, hence $x\subseteq y$.

(iii) If $a\in A$, $pr(a)\subseteq A$, while for $x\in M$, $pr(x)\subseteq M$. Since $A\cap M=\emptyset$, the claim follows.

(iv) This follows immediately from the definition of $pr(a)$.  \telos

\vskip 0.2in

Below we shall define magmatic analogues of ordered pairs, as well as natural and ordinal numbers. In these constructions we largely use  {\em unions} of magmas so we must  make sure which of such unions are legitimate, i.e., are elements of $M$, and which are  not. The following Fact  clarifies the situation.

\begin{Lem} \label{L:legitim}
Let $x, y\in M$.

(i) If $x,y\in M_1$, then $x\cup y\in M_1$.

(ii) If $x,y\in M\backslash M_1$, then $x\cup y\in M\backslash M_1$.

(iii) If $x\in M_1$ and $y\in M\backslash M_1$, then $x\cup y\notin M$.
\end{Lem}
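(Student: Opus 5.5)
The plan is to treat the three clauses separately, using the characterisation of openness in Proposition \ref{P:basics}(ii), the fact that $pr(z)={\cal P}(z)\cap M$ from (\ref{E:relPow}), and the transitivity-type facts in Proposition \ref{P:basics}(iii).

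For (i), $x,y\in M_1$ are lower open subsets of $A$. So $x\cup y$ is a nonempty subset of $A$, and for any $a\in x\cup y$, say $a\in x$, we have $pr(a)\subseteq x\subseteq x\cup y$. Hence $x\cup y\in LO(A,\preccurlyeq)=M_1$. This is routine.

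For (ii), let $x\in M_{\alpha+1}$ and $y\in M_{\beta+1}$ with $\alpha,\beta\geq 1$. This is legitimate because the limit levels are unions of earlier levels, so every element of $M$ lies in some successor level, and $x,y\notin M_1$.
\begin{itemize}
\item Put $\gamma=\max(\alpha,\beta)$. Since the levels increase ($M_\alpha\in M_{\alpha+1}$ gives $M_\alpha\subseteq M_{\alpha+1}$ by openness, and limits are unions), we get $x,y\subseteq M_\gamma$, so $x\cup y\subseteq M_\gamma$ and $x\cup y\neq\emptyset$.
\item For openness, take $z\in x\cup y$, say $z\in x$. Since $x$ is open in $M_\alpha$, we have ${\cal P}(z)\cap M_\alpha\subseteq x$. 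We need ${\cal P}(z)\cap M_\gamma\subseteq x\cup y$.
\item By Corollary \ref{C:identical} and (\ref{E:relPow}), ${\cal P}(z)\cap M_\gamma\subseteq {\cal P}(z)\cap M=pr(z)=pr_{\alpha}(z)={\cal P}(z)\cap M_\alpha\subseteq x$. This uses $z\in M_\alpha$, where $pr_\alpha(z)$ is the b.o. set computed at level $\alpha$.
\end{itemize}
Thus $x\cup y\in M_{\gamma+1}$.

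It remains to see that $x\cup y\notin M_1$. This holds because $x\subseteq M$ (Proposition \ref{P:basics}(iii)) and $M\cap A=\emptyset$, so $x\cup y\not\subseteq A$. The one delicate point is applying Corollary \ref{C:identical} when $\alpha=rank_M(z)+1$ fails, e.g. when $\alpha$ is a limit. But $z\in M_\alpha$ always implies $\alpha\geq rank_M(z)+1$, and the corollary covers every such $\beta$, so it applies.

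For (iii), suppose $x\in M_1$, $y\in M\setminus M_1$, and $x\cup y\in M$. Then $x\cup y$ contains atoms (since $x\subseteq A$ is nonempty) and also elements of $M$ (since $y\neq\emptyset$ and $y\subseteq M$). Every element of $M$ is either a subset of $A$, if it lies in $M_1$, or a subset of some $M_\alpha$. So if $x\cup y\in M_1$ it must be a subset of $A$, which is impossible since $y\subseteq M$ is nonempty and $M\cap A=\emptyset$. If instead $x\cup y\in M_{\alpha+1}$ with $\alpha\geq1$, it must be a subset of $M$, which is impossible since $x\subseteq A$. Either way we get a contradiction. The main obstacle overall is the bookkeeping of levels in (ii), specifically checking that downward closure at a lower level persists at level $\gamma$; Corollary \ref{C:identical} is exactly the tool for that.
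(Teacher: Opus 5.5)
Your parts (i) and (iii) are correct and match the paper's argument, and your plan for (ii) is also the paper's. But in (ii) the step where you take $\gamma=\max(\alpha,\beta)$ and assert $x,y\subseteq M_\gamma$ fails. Its justification, ``the levels increase,'' is false. The hierarchy is not cumulative: $M_{\alpha+1}\subseteq{\cal P}(M_\alpha)$, and the levels have no transitivity like the $V_\alpha$'s, so $M_\alpha\in M_{\alpha+1}$ does not give $M_\alpha\subseteq M_{\alpha+1}$. Elements of $M_1$ are sets of atoms, elements of $M_2$ are sets of elements of $M_1$, and so on. In fact the finite levels are pairwise disjoint, $M_{m+1}\cap M_{n+1}=\emptyset$ for $m\neq n$; this is the fact invoked in Fact \ref{F:disjoint}. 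Even at limits, $M_1\subseteq M_\omega$ but $M_1\cap M_{\omega+1}=\emptyset$, since elements of $M_{\omega+1}$ are subsets of $M$ and $A\cap M=\emptyset$.

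Concretely, take $x\in M_2$ and $y\in M_3$, so $\alpha=1$, $\beta=2$ and $\gamma=2$. Then $x\subseteq M_1$, which is disjoint from $M_2$, so $x\cup y\not\subseteq M_2$. In fact $x\cup y$ lies in no finite level and first appears in $M_{\omega+1}$.

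The repair is what the paper does: choose $\gamma$ to be a limit ordinal with $\gamma\geq\alpha,\beta$. Since $M_\gamma=\bigcup_{1\leq\delta<\gamma}M_\delta$ contains both $M_\alpha$ and $M_\beta$, you get $x\cup y\subseteq M_\gamma$. Your openness check then goes through unchanged, because the chain ${\cal P}(z)\cap M_\gamma\subseteq{\cal P}(z)\cap M=pr_\alpha(z)={\cal P}(z)\cap M_\alpha\subseteq x$, via Corollary \ref{C:identical}, never used $\gamma=\max(\alpha,\beta)$. Keep that check: the paper passes directly from $x\cup y\subseteq M_\gamma$ to $x\cup y\in M_{\gamma+1}$ without verifying downward closure. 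So once $\gamma$ is corrected, your argument for (ii) is more complete than the paper's.
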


{\em Proof.} (i) Recall that $M_1=LO(A,\preccurlyeq)$, i.e. $M_1$  consists of the lower open subsets of $A$, hence it is closed under unions.

(ii) Let $x,y\in M\backslash M_1$. Then $x\in M_{\alpha+1}$ and  $y\in M_{\beta+1}$ for some $\alpha,\beta\geq 1$. Therefore $x\subseteq M_\alpha$ and $y\subseteq M_\beta$. Then for any limit ordinal $\gamma\geq \alpha,\beta$ we have  $x\cup y\subseteq M_\gamma$ and thus $x\cup y\in M_{\gamma+1}$. So obviously $x\cup y\notin M_1$.

(iii) Let $x\in M_1$ and $y\in M\backslash M_1$. If $x\cup y\in M$, then clearly $x\cup y\notin M_1$ (otherwise $y\in M_1$). So $x\cup y\in M_{\alpha+1}$ for some $\alpha\geq 1$, and hence  $x\cup y\subseteq M_\alpha\subseteq M$. But this  is false because $x\subseteq A$ as an element of $M_1$, and $A\cap M=\emptyset$. \telos

\vskip 0.2in

An important consequence of clause (iii) of the preceding Lemma is that  in the codings that we shall use in the next section for the definition of magmatic pair, we cannot use unions  of the form $pr(a)\cup pr(pr(a))$, for $a\in A$, since $pr(a)\in M_1$ while $pr(pr(a))\in M\backslash M_1$, and thus, according to (iii), $pr(a)\cup pr(pr(a))\notin M$. So instead of $pr(a)$ we shall use  $pr^2(a)=pr(pr(a))$ which  belongs to  $M_2$.

\section{Magmatic ordered pairs}
Various definitions of ordered pair have appeared  in the literature of set theory along the time, with the prevailing and  widely adapted one being  Kuratowski's definition of $\langle x,y\rangle$ as the set $\{\{x\},\{x,y\}\}$. See \cite{Wiki}, and the sources cited there, for an overview of all proposed definitions. Among the listed definitions is also the following: $\langle x,y\rangle:=\{\{x,0\},\{y,1\}\}$. Its only ``defect'', in comparison  to Kuratowski's pair, is that it presumes the  existence of the ``special'' objects $0$ and $1$. The definition of magmatic pair we give below  resembles more the definition of pair based on the use of the special objects $0,1$, than Kuratowski's one. Specifically we fix two atoms $a_0,a_1\in A$ which are incomparable with respect to $\preccurlyeq$, i.e., $a_0\not\preccurlyeq a_1$ and $a_1\not\preccurlyeq a_0$, so equivalently $pr(a_0)\not\subseteq pr(a_1)$ and $pr(a_1)\not\subseteq pr(a_0)$. The reason for such a choice will be clear later.

\begin{Def} \label{D:MagmaPair}
{\em For any $x,y\in M$, the} magmatic pair {\em with first constituent $x$ and second constituent $y$, denoted $\langle\langle x,y\rangle\rangle$, is defined as follows:
$$\langle\langle x,y\rangle\rangle:=pr(pr^2(x)\cup pr^2(a_0))\cup pr(pr^2(y)\cup pr^2(a_1)).$$
}
\end{Def}

\textbf{Remarks.} (1) In the preceding definition  the parts  $pr^2(x)\cup pr^2(a_0)$ and $pr^2(y)\cup pr^2(a_1)$ imitate  the sets $\{x,0\}$ and $\{y,1\}$, respectively, while $\langle\langle x,y\rangle\rangle$ imitates the pair $\{\{x,0\}, \{y,1\}\}$.

(2) The reason for  raising  the ranks of   $pr(a_0)$, $pr(a_1)$ and replacing them by $pr^2(a_0)$, $pr^2(a_1)$, is due to Lemma \ref{L:legitim} (iii), according to which for $a\in A$ $pr(a)\cup x$ is not a magma unless $x\in M_1$.

(3) The use of $pr^2(a_0)$ and $pr^2(a_1)$ made  it necessary, for reasons of symmetry, to use in turn $pr^2(x)$ and $pr^2(y)$ instead  of $pr(x)$ and  $pr(y)$,  respectively.

\vskip 0.1in

The main  task  is to verify that the objects   $\langle\langle x,y\rangle\rangle$ have indeed the intended  property of ordered pairs, namely $\langle\langle x,y\rangle\rangle=\langle\langle x',y'\rangle\rangle$ $\Rightarrow$ $x=x'$ and $y=y'$. In order to establish that we need some preliminary lemmas.

\begin{Lem} \label{L:two-one}
Let $x,y,x',y',z\in M$. Then:

(i) $pr(x)\cup pr(y)\subseteq pr(z)$ iff $x\subseteq z$ and $y\subseteq z$.

(ii)  $pr(z)\subseteq pr(x)\cup pr(y)$ iff $z\subseteq x$ or $z\subseteq y$.

(iii)  $pr(x)\cup pr(y)=pr(z)$ iff  $x\subseteq y=z$ or $y\subseteq x=z$.

(iv) $pr(x)\cup pr(y)\subseteq pr(x')\cup pr(y')$ iff one of the following hold:   (a) $x,y\subseteq x'$, (b)  $x,y\subseteq y'$, (c)  $x\subseteq x'$ and $y\subseteq y'$, (d) $x\subseteq y'$ and $y\subseteq x'$.
\end{Lem}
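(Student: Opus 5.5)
The plan is to reduce everything to the identity $pr(u)={\cal P}(u)\cap M$ from (\ref{E:relPow}) together with the two facts $u\in pr(u)$ and Lemma \ref{L:injective}(ii). The basic observation is this: for $w\in M$, a union $pr(x)\cup pr(y)$ (assumed to be in $M$) is contained in $pr(w)$ if and only if both $pr(x)\subseteq pr(w)$ and $pr(y)\subseteq pr(w)$. Conversely, $pr(w)$ is contained in a union if and only if the single element $w$ lies in it. This holds because every element of $pr(w)$ is a subset of $w$, and each of the sets $pr(x)$ and $pr(y)$ is downward closed under $\subseteq$ within $M$.

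Clause (i) follows at once from the first half of the observation and Lemma \ref{L:injective}(ii): $pr(x)\subseteq pr(z)$ iff $x\subseteq z$, and similarly for $y$. For clause (ii), the forward direction uses $z\in pr(z)\subseteq pr(x)\cup pr(y)$, so $z\in pr(x)$ or $z\in pr(y)$, i.e.\ $z\subseteq x$ or $z\subseteq y$. The converse is Lemma \ref{L:injective}(ii) again, since $pr(z)\subseteq pr(x)$ or $pr(z)\subseteq pr(y)$.

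Clause (iii) comes from combining (i) and (ii). Equality gives $x,y\subseteq z$, and also $z\subseteq x$ or $z\subseteq y$. In the first case $z=x$ and $y\subseteq x=z$; in the second case $z=y$ and $x\subseteq y=z$. The converse is immediate: if $x\subseteq y=z$ then $pr(x)\subseteq pr(y)=pr(z)$, so the union is $pr(z)$.

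For clause (iv), the left side is contained in the right side iff $pr(x)\subseteq pr(x')\cup pr(y')$ and $pr(y)\subseteq pr(x')\cup pr(y')$. By (ii) applied to each of these, the first holds iff $x\subseteq x'$ or $x\subseteq y'$, and the second iff $y\subseteq x'$ or $y\subseteq y'$. Distributing gives exactly the four combinations (a)--(d), and the converse is clear from Lemma \ref{L:injective}(ii). The only mildly delicate point is in (ii) and (iv): one must use the element $z$ itself, rather than an arbitrary element of $pr(z)$, to force containment in a single summand. That step is not really an obstacle. Note that none of this needs the union to belong to $M$, since these are just set inclusions.
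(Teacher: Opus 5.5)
Your proposal is correct and follows essentially the same route as the paper. Both reduce each inclusion to membership of the generating magmas via $u\in pr(u)$ and $pr(u)={\cal P}(u)\cap M$, obtain (iii) as the conjunction of (i) and (ii), and get (iv) by distributing the two disjunctions. Your version just spells out the converse directions and the case split in (iii), which the paper leaves implicit.
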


{\em Proof.} (i)  $pr(x)\cup pr(y)\subseteq pr(z)$ iff $x\in pr(z)$ and $y\in pr(z)$ iff  $x\subseteq z$ and $y\subseteq z$.

(ii) $pr(z)\subseteq pr(x)\cup pr(y)$ iff $z\in pr(x)$ or $z\in pr(y)$ iff  $z\subseteq x$ or $z\subseteq y$.

(iii) $pr(x)\cup pr(y)=pr(z)$ iff the conjunction of (i) and (ii) holds, i.e. iff
$x\subseteq z\wedge y\subseteq z\wedge z\subseteq x$, or $x\subseteq z\wedge y\subseteq z\wedge z\subseteq y$. The first conjunction is shorty written  $y\subseteq x=z$ and the second one is shortly written  $x\subseteq y=z$.

(iv) Let $pr(x)\cup pr(y)\subseteq pr(x')\cup pr(y')$. Clearly the inclusion holds iff $x,y\in pr(x')$, or  $x,y\in pr(y')$, or ($x\in  pr(x')$ and $y\in  pr(y')$), or  ($x\in pr(y')$ and $y\in  pr(x')$). These four possibilities are equivalently written as possibilities  (a), (b), (c), (d), respectively. \telos

\vskip 0.2in

\begin{Lem} \label{L:union-of-bo}
Let $x,y$, $x',y'$ in $M$. Then   $$pr(x)\cup pr(y)=pr(x')\cup pr(y'),$$
if and only if  one of the following holds:

(I) $x=x'$ and $y=y'$, or

(II) $x=y'$ and $y=x'$, or

(III) One of the elements of $\{x,y\}$ equals one  of the elements of $\{x',y'\}$, while the  other two are submagmas  of the equal ones.
\end{Lem}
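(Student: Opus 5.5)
The plan is to reduce the equality to two inclusions and apply Lemma \ref{L:two-one}(iv) in both directions. Write $E$ for the equality $pr(x)\cup pr(y)=pr(x')\cup pr(y')$. It holds iff both $pr(x)\cup pr(y)\subseteq pr(x')\cup pr(y')$ and the reverse inclusion hold. By Lemma \ref{L:two-one}(iv), the first inclusion amounts to one of (a) $x,y\subseteq x'$, (b) $x,y\subseteq y'$, (c) $x\subseteq x'\wedge y\subseteq y'$, (d) $x\subseteq y'\wedge y\subseteq x'$. The reverse inclusion amounts to one of the symmetric conditions (a') $x',y'\subseteq x$, (b') $x',y'\subseteq y$, (c') $x'\subseteq x\wedge y'\subseteq y$, (d') $x'\subseteq y\wedge y'\subseteq x$. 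So $E$ is equivalent to a disjunction of sixteen conjunctions, one for each pair of a condition from the first list and one from the second. I would check each of these against (I)--(III).

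For the direction ``only if'' I would run through the sixteen combinations, using antisymmetry of $\subseteq$ throughout.
\begin{itemize}
\item (c) with (c') gives $x=x'$ and $y=y'$, which is (I).
\item (d) with (d') gives (II).
\item (a) with (a') gives $x,y\subseteq x'\subseteq x$. Hence $x=x'$, with $y\subseteq x$ and $y'\subseteq x$, which is (III).
\item (a) with (c') gives $x'\subseteq x\subseteq x'$, so $x=x'$, together with $y\subseteq x'$ and $y'\subseteq y\subseteq x$. This is again (III): $x=x'$ and the other two are submagmas of it.
\end{itemize}
The remaining mixed cases go the same way: one equality among the four elements is always forced, and the other two elements are bounded by the common value. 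For (b) with (b'), for instance, $x,y\subseteq y'\subseteq y$ gives $y=y'$ with $x,x'\subseteq y$. I would organise the bookkeeping by noting that each of (a),(b),(c),(d) says that every element of $\{x,y\}$ lies below some element of $\{x',y'\}$, and symmetrically for the primed conditions.

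There is a cleaner route I would actually try first. Choose a $\subseteq$-maximal element $m$ among the four. Say $m\in\{x,y\}$; the other case is symmetric. Then $m$ lies below some element of $\{x',y'\}$, which in turn lies below some element of $\{x,y\}$. Maximality forces $m$ to be equal to that primed element. This yields at least one equality between $\{x,y\}$ and $\{x',y'\}$. What remains is to analyse the remaining two elements. Either they are also equal to each other, giving (I) or (II), or they are not. In the latter case I need them to be submagmas of the equal ones, meaning of $m$. This is the step I expect to be the main obstacle, for two reasons. First, statement (III) is phrased loosely, so I must fix its precise meaning as ``the other two are $\subseteq$ the common value''. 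Second, I must verify that a configuration such as $x=x'$, $y\subseteq y'$, $y'\subseteq y$ (which collapses to (I)), or $y\subseteq x$ with $y'\subseteq x'$, is always covered. In the troublesome subcase where $m=x=x'$ and the others satisfy $y\not\subseteq x$, I would show that $y$ must lie below $y'$ and vice versa, which gives $y=y'$ and hence (I). So whenever (I) and (II) fail, both remaining elements are below $m$.

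For the ``if'' direction the argument is immediate. Cases (I) and (II) trivially give $E$. In case (III), say $x=x'$ with $y,y'\subseteq x$, we get $pr(y),pr(y')\subseteq pr(x)$ by Lemma \ref{L:injective}(ii). Both sides therefore equal $pr(x)$, and $E$ holds.
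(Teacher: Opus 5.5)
Your proof is correct. Its first part (splitting the equality into two inclusions, applying Lemma \ref{L:two-one}(iv) in each direction, and running through the sixteen combinations) is exactly the paper's proof. The maximal-element argument you say you would try first is a genuinely different and shorter route.

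That route uses only the fact that $z\in pr(u)\cup pr(v)$ iff $z\subseteq u$ or $z\subseteq v$. Take a $\subseteq$-maximal $m$ among $x,y,x',y'$, say $m=x$. Then $m$ lies below $x'$ or $y'$, and by maximality equals it. The subcase you flagged closes as you predicted:
\begin{itemize}
\item suppose $x=x'$ and $y\not\subseteq x$; then $y\in pr(x')\cup pr(y')$ forces $y\subseteq y'$;
\item $y'\subseteq x$ is excluded, since it would give $y\subseteq x$;
\item so $y'\subseteq y$, and $y=y'$, which is (I).
\end{itemize}
The case $y'\not\subseteq x$ is symmetric.

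The two routes trade off as follows. Your argument shows what the lemma really says: $pr(x)\cup pr(y)$ is the down-closure of $\{x,y\}$ in $(M,\subseteq)$, and two such down-closures coincide iff they have the same maximal generators. The cost is a few symmetry reductions (swapping primed and unprimed, $x\leftrightarrow y$, $x'\leftrightarrow y'$), which permute (I) and (II) and fix (III).

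The paper's enumeration needs no symmetry reductions but is easy to get wrong. In its list, the line labelled (a)\,\&\,(b$'$) actually restates (b)\,\&\,(b$'$). So the genuine combination $x,y\subseteq x'$, $x',y'\subseteq y$ is never written out; it gives $x,y'\subseteq x'=y$, again (III).

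Your explicit proof of the ``if'' direction is also worthwhile, since the paper only checks that each combination yields (I)--(III).
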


{\em Proof.} Let $x,y,x',y'\in M$ and assume  $pr(x)\cup pr(y)=pr(x')\cup pr(y')$. This is equivalent to the conjunction of  $pr(x)\cup pr(y)\subseteq pr(x')\cup pr(y')$ and  $pr(x')\cup pr(y')\subseteq pr(x)\cup pr(y)$. By clause (iv) of Lemma \ref{L:two-one}, the first conjunct is equivalent to the disjunction of

(a)  $x,y\subseteq x'$.

(b)  $x,y\subseteq y'$.

(c)  $x\subseteq x'$ and $y\subseteq y'$.

(d)  $x\subseteq y'$ and $y\subseteq x'$.

\noindent Similarly the second  conjunct is equivalent to the disjunction of

(a$'$) $x',y'\subseteq x$.

(b$'$) $x',y'\subseteq y$.

(c$'$) $x'\subseteq x$ and $y'\subseteq y$.

(d$'$) $x'\subseteq y$ and $y'\subseteq x$.

\noindent Therefore the equality $pr(x)\cup pr(y)=pr(x')\cup pr(y')$ is equivalent to the disjunction of the 16  combinations of (a)-(d) with (a$'$)-(d$'$), i.e., to the disjunction of the following conjuncts:

(a) $\&$ (a$'$):   $x,y\subseteq x'$ and $x',y'\subseteq x$. Then
$y,y'\subseteq x=x'$.

(a) $\&$ (b$'$): $x,y\subseteq y'$ and  $x',y'\subseteq y$. Then $x,x'\subseteq y=y'$.

(a) $\&$ (c$'$): $x,y\subseteq x'$, $x'\subseteq x$ and $y'\subseteq y$. Then $y,y'\subseteq x=x'$.

(a) $\&$ (d$'$): $x,y\subseteq x'$, $x'\subseteq y$ and $y'\subseteq x$. Then $x,y'\subseteq x'=y$.

(b) $\&$ (a$'$): $x,y\subseteq y'$,   $x',y'\subseteq x$. Then $x',y\subseteq x=y'$.

(b) $\&$ (b$'$): $x,y\subseteq y'$, $x',y'\subseteq y$. Then $x,x'\subseteq y=y'$.

(b) $\&$ (c$'$):  $x,y\subseteq y'$, $x'\subseteq x$ and $y'\subseteq y$. Then $x,x'\subseteq y=y'$

(b) $\&$ (d$'$): $x,y\subseteq y'$,  $x'\subseteq y$ and $y'\subseteq x$. Then $x',y\subseteq x=y'$.

(c) $\&$ (a$'$): $x\subseteq x'$,  $y\subseteq y'$ and  $x',y'\subseteq x$. Then $y,y'\subseteq x=x'$.

(c) $\&$ (b$'$):  $x\subseteq x'$,  $y\subseteq y'$ and $x',y'\subseteq y$. Then $x,x'\subseteq y=y'$.

(c) $\&$ (c$'$): $x\subseteq x'$,  $y\subseteq y'$, $x'\subseteq x$ and $y'\subseteq y$. Then \underline{$x=x'$ and $y=y'$}.

(c) $\&$ (d$'$): $x\subseteq x'$,  $y\subseteq y'$, $x'\subseteq y$ and $y'\subseteq x$. Then \underline{$x=y=x'=y'$}.

(d) $\&$ (a$'$): $x\subseteq y'$,  $y\subseteq x'$ and $x',y'\subseteq x$. Then $x',y\subseteq x=y'$.

(d) $\&$ (b$'$): $x\subseteq y'$,  $y\subseteq x'$ and $x',y'\subseteq y$. Then $x,y'\subseteq x'=y$.

(d) $\&$ (c$'$): $x\subseteq y'$,  $y\subseteq x'$, $x'\subseteq x$ and $y'\subseteq y$. Then \underline{$x=y=x'=y'$}.

(d) $\&$ (d$'$): $x\subseteq y'$,  $y\subseteq x'$, $x'\subseteq y$ and $y'\subseteq x$. Then \underline{$x=y'$ and $x'=y$}.

\noindent The underlined conjuncts  verify  possibilities (I) and (II), while the rest cases verify possibility (III) of the claim. \telos

\begin{Thm} \label{T:pairs}
For any $x,y$, $x',y'$ in $M$,
$$\langle\langle x,y\rangle\rangle=\langle\langle x',y'\rangle\rangle\Leftrightarrow x=x' \ \& \ y=y'.$$
\end{Thm}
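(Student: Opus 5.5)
The direction ``$\Leftarrow$'' is trivial, so the work is in ``$\Rightarrow$''. Write $X=pr^2(x)\cup pr^2(a_0)$, $Y=pr^2(y)\cup pr^2(a_1)$, and similarly $X',Y'$ for $x',y'$. Then $\langle\langle x,y\rangle\rangle=pr(X)\cup pr(Y)$ and $\langle\langle x',y'\rangle\rangle=pr(X')\cup pr(Y')$. First I would check that $X,Y,X',Y'\in M$. All of $pr^2(x),pr^2(a_i)$ lie in $M\backslash M_1$, so their unions are legitimate by Lemma \ref{L:legitim}(ii). Hence Lemma \ref{L:union-of-bo} applies to the equality $pr(X)\cup pr(Y)=pr(X')\cup pr(Y')$, giving three cases: (I) $X=X'$, $Y=Y'$; (II) $X=Y'$, $Y=X'$; (III) one of $X,Y$ equals one of $X',Y'$ and the other two are submagmas of it.

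The key tool is that $a_0,a_1$ act as tags. I would prove an auxiliary claim: $pr^2(a_0)\not\subseteq pr^2(y)\cup pr^2(a_1)$ for every $y\in M$, and symmetrically with $a_0,a_1$ swapped. The argument is as follows. Since $pr(a_0)\in pr^2(a_0)$, an inclusion would put $pr(a_0)\in pr^2(y)$ or $pr(a_0)\in pr^2(a_1)$. The first means $pr(a_0)\subseteq pr(y)$, which is impossible because $pr(a_0)\subseteq A$ is nonempty while $pr(y)\subseteq M$ and $A\cap M=\emptyset$. The second means $pr(a_0)\subseteq pr(a_1)$, contradicting the incomparability of $a_0,a_1$. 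Consequently no $X$-type set is included in any $Y$-type set, and vice versa. This kills case (II) at once, since $X=Y'$ would give $pr^2(a_0)\subseteq Y'$. In case (III), whichever set serves as the ``big'' one must contain both an $X$-type and a $Y$-type set as submagmas. But $X\subseteq Y'$ and $Y\subseteq X'$ are both excluded, so every configuration in (III) forces a forbidden inclusion, or else reduces to $X=X'$ and $Y=Y'$. I expect this bookkeeping in case (III) to be the main nuisance; I would go through the configurations listed in the proof of Lemma \ref{L:union-of-bo} and, in each non-(I) line, exhibit an inclusion of an $X$-type set into a $Y$-type set or conversely.

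It then remains to go from $X=X'$ to $x=x'$, and likewise for $y$. From $pr^2(x)\cup pr^2(a_0)=pr^2(x')\cup pr^2(a_0)$ we get $pr(x)\in pr^2(x')\cup pr^2(a_0)$. Membership in $pr^2(a_0)$ would mean $pr(x)\subseteq pr(a_0)\subseteq A$, which is impossible since $\emptyset\ne pr(x)\subseteq M$. Hence $pr(x)\subseteq pr(x')$, so $x\subseteq x'$ by Lemma \ref{L:injective}(ii). The symmetric argument gives $x'\subseteq x$, so $x=x'$. The same argument with $a_1$ gives $y=y'$.
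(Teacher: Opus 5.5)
Your proposal is correct and follows essentially the paper's own proof. You apply Lemma~\ref{L:union-of-bo} to the four unions, use the incomparability of $a_0,a_1$ together with $A\cap M=\emptyset$ to rule out Cases II and III, and in Case I recover $x=x'$ from $pr(x)\in pr^2(x')\cup pr^2(a_0)$, exactly as the paper does; packaging the tag argument as one auxiliary claim and checking via Lemma~\ref{L:legitim}(ii) that the unions lie in $M$ are minor tidy-ups, not a different route.
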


{\rm Proof.}  Suppose $\langle\langle x,y\rangle\rangle=\langle\langle x',y'\rangle\rangle$. Then
$$pr(pr^2(x)\cup pr^2(a_0))\cup pr(pr^2(y)\cup pr^2(a_1))=$$
\begin{equation} \label{E:assume}
pr(pr^2(x')\cup pr^2(a_0))\cup pr(pr^2(y')\cup pr^2(a_1)).
\end{equation}

By Lemma \ref{L:union-of-bo}, equality  (\ref{E:assume}) is true if and only if one of the following is the case:

(I) $pr^2(x)\cup pr^2(a_0)=pr^2(x')\cup pr^2(a_0)$ and $pr^2(y)\cup pr^2(a_1)=pr^2(y')\cup pr^2(a_1)$, or

(II) $pr^2(x)\cup pr^2(a_0)=pr^2(y')\cup pr^2(a_1)$ and $pr^2(y)\cup pr^2(a_1)=pr^2(x')\cup pr^2(a_0)$, or

(III) One of the magmas in $\{pr^2(x)\cup pr^2(a_0), pr^2(y)\cup pr^2(a_1)\}$ is equal to one of the magmas in  $\{pr^2(x')\cup pr^2(a_0), pr^2(y')\cup pr^2(a_1)\}$, and the other two are submagmas of the equal ones.

We check separately each of the cases I, II and III.

\vskip 0.1in

{\em Case I.} Let $pr^2(x)\cup pr^2(a_0)=pr^2(x')\cup pr^2(a_0)$ and $pr^2(y)\cup pr^2(a_1)=pr^2(y')\cup pr^2(a_1)$. We examine the first equality. We shall show that $x=x'$. Since $pr(x)\in pr^2(x)$ (as for every $x$, $x\in pr(x)$), it follows that  $pr(x)\in pr^2(x')\cup pr^2(a_0)$, i.e., $pr(x)\in pr^2(x')$  or $pr(x)\in pr^2(a_0)$. The second option is impossible because it implies  $pr(x)\subseteq pr(a_0)$ (as $x\in pr(y)$ implies $x\subseteq y$). But then $x\in pr(a_0)$, hence $x\in A$,  which is false. So necessarily $pr(x)\in pr^2(x')$, or $pr(x)\subseteq pr(x')$. By the monotonicity of $pr$, it follows  $x\subseteq x'$.  Interchanging $x$ and $x'$ we have similarly $pr(x')\in pr^2(x)\cup pr^2(a_0)$, from which we infer as before $x'\subseteq x$. Therefore the first equality of this Case implies $x=x'$. By exactly the same argument the second equation  implies $y=y'$.

\vskip 0.1in

{\em Case II.} Let $pr^2(x)\cup pr^2(a_0)=pr^2(y')\cup pr^2(a_1)$ and $pr^2(y)\cup pr^2(a_1)=pr^2(x')\cup pr^2(a_0)$. We  show that this case is impossible.  Consider the first equality. Since $pr(a_0)\in pr^2(a_0)$, it follows    $pr(a_0)\in pr^2(y')\cup pr^2(a_1)$. Then either $pr(a_0)\in pr^2(y')$ or $pr(a_0)\in pr^2(a_1)$. We show that both are impossible. If $pr(a_0)\in pr^2(y')$ then  $pr(a_0)\subseteq pr(y')$. But $pr(a_0)\subseteq A$ while  $pr(y')\subseteq  M$, for all $y'\in M$, so $pr(a_0)\cap pr^2(y')=\emptyset$. If on the other hand $pr(a_0)\in pr^2(a_1)$, we  would have  $pr(a_0)\subseteq  pr(a_1)$. But remember  that we chose $a_0,a_1$ so that $pr(a_0)\not\subseteq  pr(a_1)$ and $pr(a_1)\not\subseteq  pr(a_0)$, a contradiction again. This  proves that the first equality is impossible. This suffices for the impossibility of the entire case, but impossible is also for the same reason the second equality.

\vskip 0.1in

{\em Case III.} This Case is divided into four subcases, all of which are  impossible.

(a) $pr^2(x)\cup pr^2(a_0)=pr^2(y')\cup pr^2(a_1)$, and  the other two unions are submagmas of this. This equality was already proved impossible in Case II.

(b) $pr^2(y)\cup pr^2(a_1)=pr^2(x')\cup pr^2(a_0)$,  and  the other two unions are submagmas of this. This equality also  can be proved impossible exactly as the previous one.

(c) $pr^2(x)\cup pr^2(a_0)=pr^2(x')\cup pr^2(a_0)$  and the other two unions are submagmas of this. It was proved in case I that the first equality implies $x=x'$. However we must have also $pr^2(y)\cup pr^2(a_1)\subseteq pr^2(x)\cup pr^2(a_0)$ and
$pr^2(y')\cup pr^2(a_1)\subseteq pr^2(x')\cup pr^2(a_0)$. However the last two inclusions are impossible too, due again to the incompatibility of $pr^2(a_0)$ and $pr^2(a_1)$. For by the first inclusion we must have $pr(a_1)\in pr^2(x)\cup pr^2(a_0)$. But we saw in Case II that $pr(a_1)\notin pr^2(x)$ and $pr(a_1)\notin  pr^2(a_0)$.

(d) $pr^2(y)\cup pr^2(a_1)=pr^2(y')\cup pr^2(a_1)$  and the other two unions are submagmas of this. This is similar to (c).

In summary, it follows that only case I is possible, and hence  $\langle\langle x,y\rangle\rangle=\langle\langle x',y'\rangle\rangle$ if and only if $x=x'$ and $y=y'$. \telos

\vskip 0.2in

The generalization of magmatic pair to magmatic $n$-tuple, for $n\geq 2$,  can be given exactly as in case of ordinary pairs and tuples. Namely, we set
$$\langle\langle x,y,z\rangle\rangle:=\langle\langle \langle\langle x,y\rangle\rangle,z\rangle\rangle,$$
$$\langle\langle x_1,\ldots,x_n,x_{n+1}\rangle\rangle:=\langle\langle\langle\langle x_1,\ldots,x_n\rangle\rangle,x_{n+1}\rangle\rangle.$$

\begin{Fac} \label{DefOrdPair}
The property  of a magma to be a magmatic ordered pair is definable in $\langle A\cup M,\in\rangle$ with parameters the chosen elements $a_0,a_1\in A$.
\end{Fac}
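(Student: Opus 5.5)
The plan is to write down an explicit first-order formula $\mathrm{OP}(z,a_0,a_1)$ in the language $\{\in\}$, interpreted in $\langle A\cup M,\in\rangle$, expressing
$$(\exists x\in M)(\exists y\in M)\bigl(z=pr(pr^2(x)\cup pr^2(a_0))\cup pr(pr^2(y)\cup pr^2(a_1))\bigr).$$
It then suffices to show that each ingredient of Definition \ref{D:MagmaPair} is definable in this structure. I will build the formula up from the basic predicates in the following order.

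\textbf{Step 1.} Define the unary predicates ``$u\in A$'' and ``$u\in M$''. Since $M\cup A$ is transitive (Proposition \ref{P:basics}(iii)), and atoms have no elements while every magma is nonempty, I take $u\in A$ to be $\neg\exists v\,(v\in u)$ and $u\in M$ to be $\exists v\,(v\in u)$. The relation $u\subseteq v$ is then the usual $\forall w\,(w\in u\to w\in v)$. This is correct inside $A\cup M$, because the structure is transitive, so quantifiers over elements range over the true elements.

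\textbf{Step 2.} Define the graph of $pr$. For $x\in M$, by (\ref{E:relPow}) we have $w=pr(x)$ iff $\forall v\,(v\in w\leftrightarrow v\in M\wedge v\subseteq x)$. For an atom $a$, $pr(a)=\{b\in A:b\preccurlyeq a\}$, but $\preccurlyeq$ is not a primitive of the language. I recover it as follows: $pr(a)$ is the $\subseteq$-least element $w\in M$ with $a\in w$. Indeed, every $w\in M_1$ containing $a$ is lower open and so includes $pr(a)$, and $pr(a)$ itself lies in $M_1$. Hence $w=pr(a)$ is definable by ``$w\in M\wedge a\in w\wedge \forall w'\in M\,(a\in w'\to w\subseteq w')$''. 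This same least-magma formula also works uniformly for $x\in M$, since any magma containing $x$ contains $pr(x)$ by openness. So a single formula $P(u,w)$ defines $w=pr(u)$ on all of $A\cup M$.

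\textbf{Step 3.} Compose. The iterates $pr^2$ are obtained by existentially quantifying an intermediate value. Binary union is defined by $w=u\cup v$ iff $\forall t\,(t\in w\leftrightarrow t\in u\vee t\in v)$. I will then write $\mathrm{OP}(z,a_0,a_1)$ as
$$\exists x\,\exists y\,\exists p_1\dots\exists p_6\,\bigl(x\in M\wedge y\in M\wedge P^2(x,p_1)\wedge P^2(a_0,p_2)\wedge p_3=p_1\cup p_2\wedge\cdots\wedge z=pr(p_3)\cup pr(p_6)\bigr).$$
The remaining point is to check that all the intermediate objects actually lie in $A\cup M$, so that the existential witnesses exist in the structure. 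For this I use Lemma \ref{L:legitim}(ii): $pr^2(x)$ and $pr^2(a_0)$ lie in $M\setminus M_1$, so their union is in $M$, and the outer $pr$'s and the final union are in $M$ by the same lemma.

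The main obstacle I expect is Step 2 for atoms, since $\preccurlyeq$ is not in the language. The least-magma characterisation handles it, but it must be checked carefully that the minimality clause is evaluated correctly when restricted to $M$. That check is the openness argument given above. Everything else is routine transcription.
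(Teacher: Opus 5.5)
Your proposal is correct and is essentially the paper's proof, which writes down the formula $x\in M\wedge(\exists y,z\in M)\bigl(x=pr(pr^2(y)\cup pr^2(a_0))\cup pr(pr^2(z)\cup pr^2(a_1))\bigr)$ and declares it an $\{\in\}$-formula with parameters $a_0,a_1$. You add the details the paper leaves implicit: the $\in$-definability of $A$, $M$, $\subseteq$, $\cup$, and of $pr$ on atoms via the least-magma characterization (where it is worth saying explicitly that any magma containing an atom lies in $M_1$), together with the check via Lemma \ref{L:legitim}(ii) that the intermediate unions exist in $A\cup M$.
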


{\em Proof.} Consider the following property $\phi(x)$ in the language of the structure $\langle A\cup M,\in\rangle$:
$$\phi(x):=x\in M \wedge (\exists y,z\in M)(x=pr(pr^2(y)\cup pr^2(a_0))\cup pr(pr^2(z)\cup pr^2(a_1)).$$
Clearly $\phi(x)$ is an $\{\in\}$-property with parameters $a_0,a_1$ and
$$\mbox{``$x$ is a magmatic  ordered pair''} \Leftrightarrow \langle A\cup M\rangle \models \phi(x).$$ \telos

\section{Magmatic products. Intended vs collateral objects}
In ${\rm ZF}$  binary relations are defined as subsets of a cartesian product $X\times Y$. So we need first to define a magmatic  analogue of this product. Specifically given $x,y\in M$ such that $x,y\subseteq M$, i.e., $x,y\in M\backslash M_1$, we  want the  magmatic  product to be the least magma containing the magmatic  pairs  $\langle\langle z,w\rangle\rangle$ with $z\in x$ and $w\in y$. This magma will be denoted  $x\boxtimes y$. Since whenever  $\langle\langle z,w\rangle\rangle\in x\boxtimes y$ we must have $pr(\langle\langle z,w\rangle\rangle)\subseteq x\boxtimes y$, it is natural to define the magmatic product of $x$ and $y$ as follows:
\begin{equation} \label{E:magmrod}
x\boxtimes y:=\cup\{pr(\langle\langle z,w\rangle\rangle):z\in x \wedge  w\in y\}.
\end{equation}
The question is: what other magmas  are contained in $pr(\langle\langle z,w\rangle\rangle)$ except $\langle\langle z,w\rangle\rangle$ itself? The answer is: $pr(\langle\langle z,w\rangle\rangle)$ contains {\em infinitely many} other magmatic pairs  besides  $\langle\langle z,w\rangle\rangle$, as well as infinitely many other magmas which are not magmatic pairs. Let us first identify which other magmatic pairs are contained in $pr(\langle\langle z,w\rangle\rangle)$. The next Lemma proves these facts.

\begin{Lem} \label{L:pairinpair}
(a) $\langle\langle z',w'\rangle\rangle\in pr(\langle\langle z,w\rangle\rangle)$ iff $\langle\langle z',w'\rangle\rangle\subseteq \langle\langle z,w\rangle\rangle$ iff  $z'\subseteq z$ and $w'\subseteq w$.

(b) In addition, for each  pair $\langle\langle z,w\rangle\rangle$, the following infinite  collections of magmas which are not pairs are contained also in $pr(\langle\langle z,w\rangle\rangle)$:
$$\{pr(pr^2(z')\cup pr^2(a_0)):z'\subseteq z\}\subseteq pr(\langle\langle z,w\rangle\rangle),$$
$$\{pr(pr^2(w')\cup pr^2(a_1)):w'\subseteq w\}\subseteq pr(\langle\langle z,w\rangle\rangle).$$
\end{Lem}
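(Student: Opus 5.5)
The plan is to reduce everything to the combinatorics of Lemma \ref{L:two-one}, using (\ref{E:relPow}), i.e. $pr(u)={\cal P}(u)\cap M$. Throughout, abbreviate
$$X=pr^2(z)\cup pr^2(a_0),\quad Y=pr^2(w)\cup pr^2(a_1),\quad X'=pr^2(z')\cup pr^2(a_0),\quad Y'=pr^2(w')\cup pr^2(a_1),$$
so that $\langle\langle z,w\rangle\rangle=pr(X)\cup pr(Y)$ and $\langle\langle z',w'\rangle\rangle=pr(X')\cup pr(Y')$. First I check legitimacy. All of $pr^2(z)$, $pr^2(a_0)$, etc.\ lie in $M\backslash M_1$, so by Lemma \ref{L:legitim}(ii) we have $X,Y,X',Y'\in M$, and the same lemma gives that the pairs themselves are in $M$.

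For (a), the first equivalence is immediate. Since $\langle\langle z',w'\rangle\rangle\in M$ and $pr(u)={\cal P}(u)\cap M$, membership in $pr(\langle\langle z,w\rangle\rangle)$ is the same as inclusion in $\langle\langle z,w\rangle\rangle$.

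For the second equivalence I apply Lemma \ref{L:two-one}(iv) to $pr(X')\cup pr(Y')\subseteq pr(X)\cup pr(Y)$. This gives four options:
\begin{itemize}
\item[(a)] $X',Y'\subseteq X$;
\item[(b)] $X',Y'\subseteq Y$;
\item[(c)] $X'\subseteq X$ and $Y'\subseteq Y$;
\item[(d)] $X'\subseteq Y$ and $Y'\subseteq X$.
\end{itemize}
The key auxiliary facts are exactly those from Cases II and III of Theorem \ref{T:pairs}. First, $pr(a_1)\in Y'$ but $pr(a_1)\notin X$. Indeed, $pr(a_1)\notin pr^2(z)$ because $pr(a_1)\subseteq A$ while members of $pr^2(z)$ are subsets of $M$. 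Also $pr(a_1)\notin pr^2(a_0)$ by the incomparability of $a_0,a_1$. Hence $Y'\not\subseteq X$, and symmetrically $X'\not\subseteq Y$. This kills options (a), (b) and (d), leaving (c).

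It then remains to show $X'\subseteq X\iff z'\subseteq z$, and likewise for $Y',Y,w',w$.
\begin{itemize}
\item ($\Leftarrow$) If $z'\subseteq z$, then two applications of the monotonicity in Lemma \ref{L:injective}(ii) give $pr^2(z')\subseteq pr^2(z)$, hence $X'\subseteq X$.
\item ($\Rightarrow$) Conversely, $pr(z')\in pr^2(z')\subseteq X$. Now $pr(z')\notin pr^2(a_0)$, since it is a nonempty subset of $M$ and not of $A$. Therefore $pr(z')\in pr^2(z)$, i.e.\ $pr(z')\subseteq pr(z)$, and Lemma \ref{L:injective}(ii) gives $z'\subseteq z$.
\end{itemize}

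For (b), take $z'\subseteq z$. By the above, $X'\subseteq X$, so $pr(X')\subseteq pr(X)\subseteq\langle\langle z,w\rangle\rangle$. Since $pr(X')\in M$ (it is the basic open set of $X'$), this means $pr(X')\in pr(\langle\langle z,w\rangle\rangle)$; the case $w'\subseteq w$ is symmetric. To see that these magmas are not pairs, suppose $pr(X')=pr(P)\cup pr(Q)$ for a pair with markers $pr^2(a_0)$ in $P$ and $pr^2(a_1)$ in $Q$. Then Lemma \ref{L:two-one}(iii) forces $Q\subseteq P=X'$ or $P\subseteq Q=X'$. The first gives $pr(a_1)\in X'$ and the second gives $pr(a_0)\in Q$, and both contradict the marker argument above. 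Infinitude follows since $z$ has infinitely many submagmas (Proposition \ref{P:basics}(vi), no minimal sets) and $z'\mapsto pr(X')$ is injective by (a)'s argument together with Lemma \ref{L:injective}(i).

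The main obstacle is purely bookkeeping: making sure that the $a_0/a_1$ marker argument rules out every "crossed" inclusion, both in (a) and in the non-pair claim of (b).
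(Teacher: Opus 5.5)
Your proof is correct and follows the paper's route. You apply Lemma \ref{L:two-one}(iv) to get four options, eliminate the three crossed ones with the $a_0/a_1$ marker argument from Theorem \ref{T:pairs}, and reduce $pr^2(z')\cup pr^2(a_0)\subseteq pr^2(z)\cup pr^2(a_0)$ to $z'\subseteq z$ via $pr(z')\in pr^2(z')$. You also supply, correctly, details the paper calls obvious: that the unions involved lie in $M$, and explicit arguments that the magmas in (b) are not pairs and that the collections are infinite.
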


{\em Proof.} (a) Clearly  $\langle\langle z',w'\rangle\rangle\in pr(\langle\langle z,w\rangle\rangle)$ iff  $\langle\langle z',w'\rangle\rangle\subseteq \langle\langle z,w\rangle\rangle$, or
$$pr(pr^2(z')\cup pr^2(a_0))\cup pr(pr^2(w')\cup pr^2(a_1))\subseteq pr(pr^2(z)\cup pr^2(a_0))\cup pr(pr^2(w)\cup pr^2(a_1)).$$
By Lemma \ref{L:two-one} (iv), the last inclusion  holds iff:

(i) $pr^2(z')\cup pr^2(a_0)\subseteq pr^2(z)\cup pr^2(a_0)$ and $pr^2(w')\cup pr^2(a_1)\subseteq pr^2(w)\cup pr^2(a_1)$, or

(ii) $pr^2(z')\cup pr^2(a_0)\subseteq pr^2(z)\cup pr^2(a_0)$ and $pr^2(w')\cup pr^2(a_1)\subseteq pr^2(z)\cup pr^2(a_0)$, or

(iii)  $pr^2(z')\cup pr^2(a_0)\subseteq pr^2(w)\cup pr^2(a_1)$ and $pr^2(w')\cup pr^2(a_1)\subseteq pr^2(w)\cup pr^2(a_1)$, or

(iv) $pr^2(z')\cup pr^2(a_0)\subseteq pr^2(w)\cup pr^2(a_1)$ and $pr^2(w')\cup pr^2(a_1)\subseteq pr^2(z)\cup pr^2(a_0)$.

But it was shown in Theorem \ref{T:pairs} that the second conjunct of (ii), the first conjunct of (iii) and both conjuncts of (iv) are impossible because of the incompatibility of $pr(a_0)$ and $pr(a_1)$. So the claim is equivalent to the  conjunction  (i), of  $pr^2(z')\cup pr^2(a_0)\subseteq pr^2(z)\cup pr^2(a_0)$ and $pr^2(w')\cup pr^2(a_1)\subseteq pr^2(w)\cup pr^2(a_1)$. It  was also shown  in \ref{T:pairs} that this is true  if and only if  $z'\subseteq z$ and $w'\subseteq w$.

(b) Just observe that given  $\langle\langle z,w\rangle\rangle$, for each $z'\subseteq z$ and $w'\subseteq w$, $pr(pr^2(z')\cup pr^2(a_0))\subseteq  \langle\langle z,w\rangle\rangle$ and $pr(pr^2(w')\cup pr^2(a_1))\subseteq  \langle\langle z,w\rangle\rangle$, and the magmas $pr(pr^2(z')\cup pr^2(a_0))$,  $pr(pr^2(w')\cup pr^2(a_1))$ are obviously non-pairs.  \telos

\vskip 0.2in

The question is why we did not define $x\boxtimes y$ simply imitating the standard product and setting $x\boxtimes y:=\{\langle\langle z,w\rangle\rangle:z\in x \wedge  w\in y\}$. The answer is that $\{\langle\langle z,w\rangle\rangle:z\in x \wedge  w\in y\}$ is not a magma, and this follows from Lemma \ref{L:pairinpair} (ii): $\{\langle\langle z,w\rangle\rangle:z\in x \wedge  w\in y\}$ consists of pairs alone while the sets $pr(\langle\langle z,w\rangle\rangle)$ which must be included in $x\boxtimes y$ contain plethora of magmas which are not pairs. Nevertheless, the collection $\{\langle\langle z,w\rangle\rangle:z\in x \wedge  w\in y\}$ deserves a notation and a name because contains precisely the pairs contained in $x\boxtimes y$. We call it {\em weak magmatic product} and we denote it by $\boxdot$. That is, we set
\begin{equation} \label{E:weakprod}
x\boxdot y:=\{\langle\langle z,w\rangle\rangle:z\in x \wedge  w\in y\}.
\end{equation}
Given two collections of magmas $x,y$ let us denote
$x=_p y$ iff  $x$, $y$ contain exactly the same magmatic pairs.

\begin{Fac} \label{F:simply}
For any $x,y\in M\backslash M_1$, $x\boxtimes y=_px\boxdot y$.
\end{Fac}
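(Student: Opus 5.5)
We show that both collections contain exactly the same magmatic pairs, proving the two inclusions separately.

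\emph{Pairs of $x\boxdot y$ lie in $x\boxtimes y$.} Every element of $x\boxdot y$ has the form $\langle\langle z,w\rangle\rangle$ with $z\in x$ and $w\in y$. Since $\langle\langle z,w\rangle\rangle\in pr(\langle\langle z,w\rangle\rangle)$, definition (\ref{E:magmrod}) gives $\langle\langle z,w\rangle\rangle\in x\boxtimes y$ at once.

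\emph{Pairs of $x\boxtimes y$ lie in $x\boxdot y$.} Let $p$ be a magmatic pair in $x\boxtimes y$, say $p=\langle\langle z',w'\rangle\rangle$. By (\ref{E:magmrod}) there are $z\in x$ and $w\in y$ with $p\in pr(\langle\langle z,w\rangle\rangle)$. By Lemma \ref{L:pairinpair}(a) this gives $z'\subseteq z$ and $w'\subseteq w$. Two facts are needed here.

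\begin{itemize}
\item The constituents of $p$ are uniquely determined by $p$. This is Theorem \ref{T:pairs}, so applying Lemma \ref{L:pairinpair}(a) to them is legitimate.
\item We need $z'\in x$ and $w'\in y$. Since $x,y\in M\backslash M_1$, we have $x\in M_{\alpha+1}$ for some $\alpha\geq 1$. Proposition \ref{P:basics}(ii) then says that $x$ is downward closed: $z\in x$ implies ${\cal P}(z)\cap M_\alpha\subseteq x$.
\end{itemize}

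It remains to know that $z'\in M_\alpha$. Here $z'\in M$ and $z'\subseteq z$, so $z'\in pr(z)={\cal P}(z)\cap M$. By Corollary \ref{C:identical}, $pr(z)$ equals $pr_\alpha(z)$, the basic open set of $M_{\alpha+1}$, computed inside $M_\alpha$. Hence $z'\in x$. The same argument gives $w'\in y$, so $p\in x\boxdot y$.

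The main obstacle is this rank bookkeeping. Corollary \ref{C:identical} requires $\alpha\geq rank_M(z)+1$, which holds because $z\in M_\alpha$ and so $rank_M(z)<\alpha$. Alternatively, Proposition \ref{P:powrank}(i)--(iii) gives directly that ${\cal P}(z)\cap M\subseteq M_\alpha$. In either form, a submagma of $z$ lies in the same level $M_\alpha$ as $z$, and the downward closure of $x$ and $y$ then finishes the proof.

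The hypothesis $x,y\notin M_1$ is used in two places. It guarantees that the elements of $x$ and $y$ are magmas, so that the pairs in (\ref{E:magmrod}) are defined. It also lets us apply Proposition \ref{P:basics}(ii) in place of the lower-open condition on atoms.
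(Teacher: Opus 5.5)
Your proof is correct and matches the paper's. The inclusion $x\boxdot y\subseteq x\boxtimes y$ is immediate; for the converse you use Lemma \ref{L:pairinpair}(a) to get $z'\subseteq z$ and $w'\subseteq w$, and then the downward closure of the magmas $x,y$. Your rank bookkeeping via Corollary \ref{C:identical} just makes explicit the step $pr(z)\subseteq x$ that the paper takes for granted, and the appeal to Theorem \ref{T:pairs} is harmless but unnecessary, since any representation $p=\langle\langle z',w'\rangle\rangle$ suffices.
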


{\em Proof.} Obviously $x\boxdot y\subseteq x\boxtimes y$. So it suffices to see that every pair of $x\boxtimes y$ belongs to $x\boxdot y$. Pick a  $\langle\langle z',w'\rangle\rangle\in x\boxtimes y=\cup\{pr(\langle\langle z,w\rangle\rangle):z\in x \wedge w\in y\}$. Then $\langle\langle z',w'\rangle\rangle\in pr(\langle\langle z,w\rangle\rangle)$, for some $\langle\langle z,w\rangle\rangle$ with  $z\in x$ and $w\in y$. It follows that  $\langle\langle z',w'\rangle\rangle\subseteq \langle\langle z,w\rangle\rangle$, hence by Lemma \ref{L:pairinpair} (a),  $z'\subseteq z$ and  $w'\subseteq w$. But since $x$ is a magma, $z\in x$ implies $pr(z)\subseteq x$, so  $z'\subseteq z$ implies $z'\in x$. For the same reason $w'\in y$, therefore, by definition, $\langle\langle z',w'\rangle\rangle\in x\boxdot y$. \telos

\vskip 0.2in

Another fact concerning $x\boxtimes y$ is the following.

\begin{Cor} \label{C:pair-produc}
For any $x,y\in M$,  $pr(\langle\langle x,y\rangle\rangle)=_p pr(x)\boxtimes pr(y)$.
\end{Cor}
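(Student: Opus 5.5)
The plan is to compare the magmatic pairs on each side, since $=_p$ only requires that the two collections contain exactly the same pairs. The key tool is Lemma \ref{L:pairinpair} (a). It turns membership of a pair in $pr(\langle\langle x,y\rangle\rangle)$ into a componentwise inclusion condition.

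\textbf{Left side.} A magmatic pair $\langle\langle z',w'\rangle\rangle$ belongs to $pr(\langle\langle x,y\rangle\rangle)$ iff $z'\subseteq x$ and $w'\subseteq y$, by Lemma \ref{L:pairinpair} (a). Since $pr(x)={\cal P}(x)\cap M$ by (\ref{E:relPow}), this is equivalent to $z'\in pr(x)$ and $w'\in pr(y)$. Hence the pairs of the left side are exactly the elements of $pr(x)\boxdot pr(y)$.

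\textbf{Right side.} We have $pr(x),pr(y)\subseteq M$, so $pr(x),pr(y)\in M\backslash M_1$. Thus Fact \ref{F:simply} applies and gives $pr(x)\boxtimes pr(y)=_p pr(x)\boxdot pr(y)$.

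Combining the two steps, both sides contain exactly the pairs of $pr(x)\boxdot pr(y)$, which is the claim.

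Alternatively, the right side can be handled directly from the definition (\ref{E:magmrod}). A pair lies in $pr(x)\boxtimes pr(y)$ iff it lies in some $pr(\langle\langle z,w\rangle\rangle)$ with $z\subseteq x$ and $w\subseteq y$. By Lemma \ref{L:pairinpair} (a) this means it is $\langle\langle z',w'\rangle\rangle$ with $z'\subseteq z\subseteq x$ and $w'\subseteq w\subseteq y$. Taking $z=x$ and $w=y$ in the union gives the converse. In fact this shows $pr(x)\boxtimes pr(y)=pr(\langle\langle x,y\rangle\rangle)$ outright, since every summand $pr(\langle\langle z,w\rangle\rangle)$ is contained in the one indexed by $z=x$, $w=y$, via Lemma \ref{L:injective} (ii).

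The only point needing care is the side condition of Fact \ref{F:simply}, namely that $pr(x),pr(y)$ are not in $M_1$. This holds because their elements are magmas rather than atoms, and $A\cap M=\emptyset$. Beyond this there is no real obstacle.
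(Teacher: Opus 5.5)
Your proof is correct, and it differs from the paper's mainly in organization, plus one genuine strengthening. The paper does a direct two-sided check on pairs. It unpacks the definition (\ref{E:magmrod}) of $pr(x)\boxtimes pr(y)$ and uses Lemma \ref{L:pairinpair} (a) in both directions to pass between inclusions of pairs and componentwise inclusions. Your main route has the same content but factors it through the weak product. You identify the pairs of $pr(\langle\langle x,y\rangle\rangle)$ with $pr(x)\boxdot pr(y)$ and then cite Fact \ref{F:simply}. You verify its hypothesis correctly: a nonempty subset of $M$ cannot lie in $M_1$, because $A\cap M=\emptyset$. The paper's $\Leftarrow$ half is in effect an inline re-proof of Fact \ref{F:simply} for this instance. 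Your closing remark is the better argument. Because $x\in pr(x)$ and $y\in pr(y)$, the summand $pr(\langle\langle x,y\rangle\rangle)$ occurs in the union defining $pr(x)\boxtimes pr(y)$. It absorbs every other summand, since $z\subseteq x$ and $w\subseteq y$ give $\langle\langle z,w\rangle\rangle\subseteq\langle\langle x,y\rangle\rangle$ and hence $pr(\langle\langle z,w\rangle\rangle)\subseteq pr(\langle\langle x,y\rangle\rangle)$. This gives literal equality of the two magmas, not just $=_p$. It uses only the trivial monotone direction of Lemma \ref{L:pairinpair} (a), not the converse that depends on the incomparability of $a_0,a_1$. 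It also supplies exactly what the paper later relies on when it calls $pr(\langle\langle z,w\rangle\rangle)$ ``identical'' to $pr(z)\boxtimes pr(w)$, which the $=_p$ statement alone does not justify.
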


{\em Proof.}  $\Leftarrow$: Let $\langle\langle u,v\rangle\rangle\in pr(x)\boxtimes pr(y)$. Then, by definition  $\langle\langle u,v\rangle\rangle\in pr(\langle\langle z,w\rangle\rangle)$ for some $z\in pr(x)$ and $w\in pr(y)$, or equivalently, $\langle\langle u,v\rangle\rangle\subseteq  \langle\langle z,w\rangle\rangle$  for some  $z\subseteq x$ and $w\subseteq y$. By Lemma \ref{L:pairinpair}, $\langle\langle u,v\rangle\rangle\subseteq  \langle\langle z,w\rangle\rangle$ holds iff $u\subseteq z$ and $v\subseteq w$. Since further $z\subseteq x$ and $w\subseteq y$, it follows that $u\subseteq x$ and $v\subseteq y$, so by \ref{L:pairinpair} again, $\langle\langle u,v\rangle\rangle\subseteq \langle\langle x,y\rangle\rangle$, or $\langle\langle u,v\rangle\rangle\in pr(\langle\langle x,y\rangle\rangle)$. This proves $\Leftarrow$.

$\Rightarrow$: Let $\langle\langle u,v\rangle\rangle\in pr(\langle\langle x,y\rangle\rangle)$. Then $\langle\langle u,v\rangle\rangle\subseteq \langle\langle x,y\rangle\rangle$, hence by Lemma \ref{L:pairinpair}, $u\subseteq x$ and $v\subseteq y$. It follows $u\in pr(x)$ and $v\in pr(y)$, so $\langle\langle u,v\rangle\rangle\in pr(\langle\langle u,v\rangle\rangle)\subseteq pr(x)\boxtimes pr(y)$. \telos

\vskip 0.2in

The distinction between the collection $x\boxtimes y$, which is a magma, and $x\boxdot y$, which is not a magma but just  a set, spurs a more general distinction  concerning the role of some elements of magmas. In many cases, and especially when we come to define  relations and functions (see next section), we need to focus on some particular {\em isolated elements} of a given magma $x$, which occur also under an  {\em enumeration} $\{y_i:i\in I\}\subset x$, for a set $I$.  The reason that an agent wants to focus on the isolated elements $\{y_i:i\in I\}$ of $x$ is that they play a particular role,  specifically they {\em generate} $x$ in the form
$$\cup\{pr(y_i):i\in I\},$$
that is, $x$ is the least magma containing all $y_i$, $i\in I$. This happens, for example, when an agent wants to match an element $z_i$ of a magma $x$ with  an element $w_i$ of another magma $y$, in order to define a binary relation between $x$ and $y$.  This is done by gathering together the magmatic pairs $\langle\langle z_i,w_i\rangle\rangle$ and defining this relation as the magma
$$\cup\{pr(\langle\langle z_i,w_i\rangle\rangle):i\in I\}.$$
Now just as with the difference between $x\boxtimes y$ and $x\boxdot y$, the difference between  $\cup\{pr(y_i):i\in I\}$ and $\{y_i:i\in I\}$, or between $\cup\{pr(\langle\langle z_i,w_i\rangle\rangle):i\in I\}$ and $\{\langle\langle z_i,w_i\rangle\rangle:i\in I\}$, lies in the fact that the second collections of these couples contain exactly the isolated elements on which we want to focus, and which we can  reasonably  call {\em intended} elements, while the corresponding magmas contain many more elements, namely the elements of
$$\cup\{pr(y_i):i\in I\}\backslash \{y_i:i\in I\}$$ and
$$\cup\{pr(\langle\langle z_i,w_i\rangle\rangle):i\in I\}\backslash \{\langle\langle z_i,w_i\rangle\rangle:i\in I\},$$
respectively, which we call {\em unintended} or {\em collateral} elements. They are simply the elements which {\em depend} on the intended elements (under the dependence relation $\subseteq$ between magmas) and come into play without  agent's intention.

The problem however  is that sets like $\{y_i:i\in I\}$ and $\{\langle\langle z_i,w_i\rangle\rangle:i\in I\}$ are not available in $M$, that is, for  an agent ``living'' in $M$ simply do not exist, and any discussion about them can be carried only externally, by agents living outside of $M$.

To remedy this situation, we shall make here a rather harmless  expansion of $M$ by adjoining to it all such sets of isolated elements. The sets $\{y_i:i\in I\}$ are elements of the class of functions $M^I=\{f|f:I\rightarrow M\}$, where we identify $\{y_i:i\in I\}$ with the function $f:I\rightarrow M$ with $f(i)=y_i$ for each $i\in I$. Then  setting
$$M^{\rm seq}=\cup\{M^I: I\in V\} \ \mbox{and} \ \widehat{M}=M\cup M^{\rm seq},$$
$\widehat{M}$ is the required expansion of $M$ in which we can talk about sets of isolated magmas. $\widehat{M}$ is equipped with $\in$ too, just as $M$.

\begin{Def} \label{D:generate}
{\em A magma $x$ is said to be} generated {\em  by the set of magmas $\{y_i:i\in I\}$ if $x=\cup\{pr(y_i):i\in I\}$. In such a case we call  $y_i$} intended elements of $x$ {\em  and we call  the rest elements of $x$} collateral. {\em  We  denote $int(x)$ and $col(x)$ the sets of intended and collateral elements of $x$, respectively.}
\end{Def}
For the rest of this section we shall work in the structure $\langle \widehat{M},\in\rangle$ rather than $\langle M,\in\rangle$. As already said, the distinction between intended and collateral elements is vividly exemplified in the distinction between the  magmatic  product $x\boxtimes y$ defined in (\ref{E:magmrod}) above, and the corresponding weak product $x\boxdot y$ defined in (\ref{E:weakprod}). It is immediate from the definitions that
$$int(x\boxtimes y)=x\boxdot y, \ \mbox{and} \
col(x\boxtimes y)=(x\boxtimes y)\backslash x\boxdot y.$$
Notice that $x\boxdot y\in \widehat{M}\backslash  M$. Notice also that although a collection of magmas $\{y_i:i\in I\}$ always generates uniquely a magma $x$ with $int(x)=\{y_i:i\in I\}$, the converse is not true. That is, given in general a magma $x$, by no means there is a {\em unique} set of generators $\{y_i:i\in I\}$ for $x$.  In general there can be several sets of generators for a given magma. In fact which subset of $x$ we choose at each case to consider as ``set of generators'' for $x$ is mostly a {\em subjective} choice and decision. It depends on the specific form of $x$ and the purpose for which we want to deal with $x$. For example the magmatic product $x\boxtimes y$ is not a general magma that came up suddenly out of the blue,  but we intentionally and  {\em on purpose} defined it in order to imitate the cartesian product. Then its definition (\ref{E:magmrod}) naturally gives to the elements of $x\boxdot y$ the status of generators.

\vskip 0.2in

\textbf{Remarks about} $\boldsymbol{M^{\rm seq}}$.

1) If we drop the enumeration from a set of magmas $\{y_i:i\in I\}$, we get just a nonempty subset of $M$ (the range of the function $f:I\rightarrow M$, with $f(i)=y_i$,  which produces the particular enumeration). So alternatively $M^{\rm seq}$ might be defined as the set of nonempty subsets of $M$, ${\cal P}(M)\backslash\{\emptyset\}$. However in such a case we should most probably need to use new variables in order to refer to the elements of $M^{\rm seq}$ within the expanded structure $\widehat{M}$. So the use of the elements of $M^{\rm seq}$ as enumerated sets of magmas is rather more convenient.

2) The sets in  $M^{\rm seq}$ comprise also the elements of $M$, since every  $x\in M$, being a subset of $M$, can obviously acquire several enumerations in the context of $V(A)$. So actually $M\subseteq M^{\rm seq}$, although in practice an agent of $\widehat{M}$ aims to enumerate isolated elements of $M$ only, i.e., elements which do not form a magma. Nevertheless the fact that $M\subseteq M^{\rm seq}$ causes no problem in the treatment of $\widehat{M}$.

3) On the other hand, $M^{\rm seq}$ is a ``flat'' class of sets, that is it  does not contain sets of subsets of $M$, e.g. for $x\in M$ $\{x\}\in M^{\rm seq}$, but $\{\{x\}\}\notin M^{\rm seq}$, and hence $\langle x,y\rangle=\{\{x\},\{x,y\}\}\notin M^{\rm seq}$ too. Therefore standard pairs are not available in $\widehat{M}$, which means that even in $\widehat{M}$ it is necessary to define   magmatic ordered pairs as we already did in the last section.

\vskip 0.2in

The role of collateral elements is particularly crucial since they are the main source of problems in the treatment of magmas. Even when  we are interested in a {\em single intended} magma $x$ which belongs to  a magma $y$, in order to deal with it in the context of $y$, we need to treat it as an element of the smallest magma $pr(x)\subseteq y$. But as we already noticed in footnote 1, each magma $pr(x)$ is always  a (local) infinite powerset, and hence a notoriously  unmanageable and uncontrollable collection. That is, together with the single intended element we need to cope with the infinity of collateral elements of $pr(x)\backslash\{x\}$ which are just the elements that depend to $x$.

Now it is interesting that in many cases there are two types of collateral elements. One type of collateral elements of $pr(x)$ may consist of elements $x'\in pr(x)$ which are ``similar'' to $x$ with respect to some specific notion of similarity that holds in the given  case. The other type of collateral elements of $pr(x)$ are those not similar to $x$.  For example if the intended magma is a pair $\langle\langle x,y\rangle\rangle$, the elements of $pr(\langle\langle x,y\rangle\rangle)$  similar to $\langle\langle x,y\rangle\rangle$ are  pairs again. And as we saw above such pairs are exactly those of the form $\langle\langle x',y'\rangle\rangle$, with $x'\subseteq x$ and $y'\subseteq y$. The other type of collateral elements of $pr(\langle\langle x,y\rangle\rangle)$ consists of elements which are not pairs. Lemma \ref{L:pairinpair} proves exactly that: namely each $pr(\langle\langle x,y\rangle\rangle)$ contains both an infinity of similar collateral elements (clause (a)), and an infinity of non-similar collateral elements (clause (b)). Notice that the product  $x\boxtimes y$ and the weak product $x\boxdot y$ differ only in {\em non-similar} elements, i.e.,  $x\boxtimes y$ and $x\boxdot y$  contain the same pairs, and  $x\boxtimes y\backslash x\boxdot y$ consists of non-pairs alone. So, in a sense, the ``noise'' added to $x\boxtimes y$ by collateral elements can be rather  neglected as it comes from elements of the second type. We shall see however that if we come to define magmatic relations and functions in a fashion similar to the standard one, the ``noise'' added may be due to collateral elements of the first kind, i.e., pairs again.

\subsection{Magmatic relations and functions in $\widehat{M}$}
We shall follow the standard method  of defining binary relations over magmas $x,y$ as certain submagmas of the product $x\boxtimes y$. However  not any such submagma $z\subseteq x\boxtimes y$ is eligible for that purpose. Obviously it must contain magmatic pairs, while it follows from  clause (b) of Lemma \ref{L:pairinpair} that  $x\boxtimes y$ may contain submagmas containing no pairs at all. In any case, the most natural way to define such submagmas is by means of a set of generators, which are in fact the {\em intended} pairs which we want to include in the relation, and will be  presented  under an enumeration $\{\langle\langle z_i,w_i\rangle\rangle:i\in I\}$. So a binary relation on $x\boxtimes y$, will be a magma $\textbf{R}\subseteq x\boxtimes y$, such that
\begin{equation} \label{D:relation}
\textbf{R}=\cup\{pr(\langle\langle z_i,w_i\rangle\rangle):i\in I\},
\end{equation}
and the set $\{\langle\langle z_i,w_i\rangle\rangle:i\in I\}$ is the set of intended pairs of $\textbf{R}$. Note that the (important) difference of such a general relation from the product $x\boxtimes y$ (which is a relation too), is  the fact that while the intended pairs of $x\boxtimes y$ are {\em all} pairs $\langle\langle z,w\rangle\rangle$, with $z\in x$ and $w\in y$, i.e., all elements of $x\boxdot y$, the intended pairs of $\textbf{R}$ form only a  subset of $x\boxdot y$. Given such a  binary relation $\textbf{R}$, the {\em domain} and {\em range} of $\textbf{R}$ are the magmas $dom(\textbf{R})$, $ran(\textbf{R})$ defined as follows:
$$dom(\textbf{R})=\cup\{pr(z):(\exists w)(pr(\langle\langle z,w\rangle\rangle)\subseteq \textbf{R})\},$$
$$ran(\textbf{R})=\cup\{pr(w):(\exists z)(pr(\langle\langle z,w\rangle\rangle)\subseteq \textbf{R})\}.$$
First it is not hard to verify the following.

\begin{Fac} \label{F:same}
If $\textbf{R}=\cup\{pr(\langle\langle z_i,w_i\rangle\rangle):i\in I\}$, then
$dom(\textbf{R})=\cup\{pr(z_i):i\in I\}$  and
$ran(\textbf{R})=\cup\{pr(w_i):i\in I\}.$
\end{Fac}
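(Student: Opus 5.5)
We prove the two equalities by double inclusion; the argument for $ran(\textbf{R})$ is symmetric to that for $dom(\textbf{R})$, so we only treat the domain. Throughout we use three facts: the characterization $pr(u)={\cal P}(u)\cap M$ from (\ref{E:relPow}); Lemma \ref{L:injective} (ii), that $pr$ is a $\subseteq$-embedding; and Lemma \ref{L:pairinpair} (a), that $\langle\langle z',w'\rangle\rangle\subseteq\langle\langle z,w\rangle\rangle$ iff $z'\subseteq z$ and $w'\subseteq w$.

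For $\supseteq$, fix $i\in I$. Since $pr(\langle\langle z_i,w_i\rangle\rangle)\subseteq\textbf{R}$ by the definition of $\textbf{R}$, the witness $w=w_i$ shows that $pr(z_i)$ is one of the sets whose union is $dom(\textbf{R})$. Hence $\cup\{pr(z_i):i\in I\}\subseteq dom(\textbf{R})$.

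For $\subseteq$, suppose $pr(\langle\langle z,w\rangle\rangle)\subseteq\textbf{R}$. Then $\langle\langle z,w\rangle\rangle\in\textbf{R}$, so $\langle\langle z,w\rangle\rangle\in pr(\langle\langle z_i,w_i\rangle\rangle)$ for some $i\in I$. By (\ref{E:relPow}) this means $\langle\langle z,w\rangle\rangle\subseteq\langle\langle z_i,w_i\rangle\rangle$, and Lemma \ref{L:pairinpair} (a) then gives $z\subseteq z_i$ and $w\subseteq w_i$. By Lemma \ref{L:injective} (ii), $pr(z)\subseteq pr(z_i)$. Taking the union over all admissible $z$ yields $dom(\textbf{R})\subseteq\cup\{pr(z_i):i\in I\}$.

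Exchanging the roles of the constituents gives $w\subseteq w_i$, hence $pr(w)\subseteq pr(w_i)$, and therefore $ran(\textbf{R})=\cup\{pr(w_i):i\in I\}$.

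The main point needing care is the passage from membership in the union $\textbf{R}$ to membership in a single generator $pr(\langle\langle z_i,w_i\rangle\rangle)$. This step is immediate, but it relies on Lemma \ref{L:pairinpair} (a) to transfer the inclusion of pairs to the constituents. The collateral \emph{non-pair} elements of $\textbf{R}$ play no role, because the defining condition of $dom$ and $ran$ quantifies only over pairs $\langle\langle z,w\rangle\rangle$.
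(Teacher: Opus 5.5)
Your proof is correct and follows the paper's argument essentially verbatim: you read off $\supseteq$ from the definition of $dom(\textbf{R})$ with witness $w_i$, and for $\subseteq$ you pass from $\langle\langle z,w\rangle\rangle\in\textbf{R}$ to $\langle\langle z,w\rangle\rangle\subseteq\langle\langle z_i,w_i\rangle\rangle$ for some $i$, and then to $z\subseteq z_i$ via Lemma \ref{L:pairinpair} (a). The only cosmetic difference is that you conclude $pr(z)\subseteq pr(z_i)$ by monotonicity of $pr$, whereas the paper argues elementwise with $z'\subseteq z\subseteq z_i$.
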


{\em Proof.} We show only the claim for $dom(\textbf{R})$.  First it is immediate from the definition of $dom(\textbf{R})$ that $\cup\{pr(z_i):i\in I\}\subseteq dom(\textbf{R})$. For the converse pick a  $z'\in dom(\textbf{R})$. By the definition of the latter there are $z,w$ such that $z'\in pr(z)$ and $\langle\langle z,w\rangle\rangle\in \textbf{R}$. But also by the definition of $\textbf{R}$ it follows that for some $i\in I$, $\langle\langle z,w\rangle\rangle\subseteq \langle\langle z_i,w_i\rangle\rangle$, from which we get $z\subseteq z_i$, and  since $z'\subseteq z$ we have also $z'\subseteq z_i$, i.e. $z'\in pr(z_i)$ as required. \telos

\vskip 0.2in

The difference between $\textbf{R}$ and $x\boxtimes y$  that makes the treatment of  the former  harder  is that  $\textbf{R}$ contains collateral elements of the {\em first type}, i.e. pairs too, while, as we have seen, $x\boxtimes y$ contains  collateral elements only of the second type, i.e. non-pairs. Specifically, whereas the intended elements of $\textbf{R}$ are precisely the pairs $\langle\langle z_i,w_i\rangle\rangle$, $i\in I$, in fact $\textbf{R}$ contains in addition all pairs of the form $\langle\langle z'_i,w'_i\rangle\rangle$, for $i\in I$, with  $z'_i\subseteq z_i$ and $w'_i\subseteq w_i$. Practically it means that if we want  $\textbf{R}$ to be, say,  a magmatic order relation $\boldsymbol{<}$, with intended domain $\{z_i:i\in I\}$ and intended range $\{w_i:i\in I\}$, such that $z_i\boldsymbol{<}w_i$ and {\em nothing beyond them},  then, in view of  definition (\ref{D:relation}) and the corresponding domains and range  $dom(\textbf{R})$, $ran(\textbf{R})$,  we have:

$\bullet$ to accept as real domain of $\boldsymbol{<}$ the  magma $dom(\boldsymbol{<})=\cup\{pr(z_i:i\in I\}$, and as real range of $\boldsymbol{<}$ the  magma $ran(\boldsymbol{<})=\cup\{pr(w_i:i\in I\}$, and moreover

$\bullet$ to accept  that for {\em every}  $z'_i\subseteq z_i$, and  {\em every}  $w'_i\subseteq w_i$,  $z'_i\boldsymbol{<}w'_i$

\noindent All these extra pairs $\langle\langle z'_i,w'_i\rangle\rangle$ which are forced into $\boldsymbol{<}$,  are regular pairs entirely similar to the intended ones, and so  by no means could be ignored, as possibly could be the case with  collateral elements of second type. We can  see however that in the case of the relation $<$, the collateral pairs  preserve  certain  properties of the  intended ones like  symmetry and transitivity, but fail to have  the antisymmetry property.

\vskip 0.1in

{\textbf{Functions.} The problem with the collateral pairs which are forced into the domain and range of a relation is even more  acute when the relation we want to define and deal with is a function. The peculiarity of functions, compared to  other   binary relations, lies in the critical role that  plays in its definition  the requirement of {\em uniqueness}: a (usual) relation $R$ is a function if and only if for every $x\in dom(R)$ there is a {\em unique} $y\in ran(R)$ such that $\langle x,y\rangle\in R$. There is also another  less critical uniqueness requirement that concerns the elements of $dom(R)$ for a certain kind of functions: for every $y\in ran(R)$ there is a {\em unique} $x\in ran(R)$ such that $\langle x,y\rangle\in R$.  These are the 1-1 functions.  No other kind of binary relation shares such  requirements.

It is clear that the above first  uniqueness condition should apply also to a  relation $\textbf{R}$ which is going to play the role of a ``magmatic function'', that is to each $z\in dom(\textbf{R})$ there must correspond a unique $w\in ran(\textbf{R})$.  From this it follows immediately that such an $\textbf{R}$  should be generated by a set of intended pairs $\langle\langle z_i,w_i\rangle\rangle$, $i\in I$, with the property
\begin{equation} \label{E:unique}
(\forall  i,j\in I) (z_i=z_j \Rightarrow w_i=w_j).
\end{equation}
Obviously (\ref{E:unique}) is a necessary condition in order for $\textbf{R}$  to behave as a function but by no means a sufficient one. Nevertheless, relations satisfying  (\ref{E:unique}) deserve  a name.

\begin{Def} \label{D:necessary}
{\em A magmatic relation $\textbf{R}$ is said to be a} (magmatic) semi-function {\em if
$$\textbf{R}=\cup\{pr(\langle\langle z_i,w_i\rangle\rangle):i\in I\},$$
where the set of generators $\{\langle\langle z_i,w_i\rangle\rangle:i\in I\}$ satisfies (\ref{E:unique}).}
\end{Def}
Now as it happens with any magmatic relation, as we saw earlier,  the unavoidable  occurrence in semi-functions $\textbf{R}$ of the collateral pairs $\langle\langle z'_i,w'_i\rangle\rangle$, for all $z'_i\subseteq z_i$ and $w'_i\subseteq w_i$, spoils completely any sense of uniqueness among all pairs of $\textbf{R}$,  since, simply,  for each $i\in I$, $\{\langle\langle z_i,w'_i\rangle\rangle:w'_i\subseteq w_i\}\subseteq \textbf{R}$, which means that to each $z_i\in dom(\textbf{R})$ there correspond infinitely many ``images'' $w'_i\subseteq w_i$.

In order to circumvent this inevitable fact, we shall follow a different path in order to define  magmatic functions. We shall exploit a remarkable property that have essentially all intended pairs that generate relations: namely if $\langle\langle z,w\rangle\rangle$ is an intended pair of $\textbf{R}$, then both $z$ and $w$ are {\em maximal} elements with respect to the elements $z'$ and  $w'$ which occur in collateral pairs $\langle\langle z',w'\rangle\rangle$, since $z'\subseteq z$ and  $w'\subseteq w$ for all such pairs. We shall use the maximality of the second element of those intended pairs as follows. Given  $\textbf{R}$, let us set   for every $z\in dom(\textbf{R})$,
$$\textbf{R}[z]=\{w:\langle\langle z,w\rangle\rangle\in \textbf{R}\}=\cup\{pr(w):\langle\langle z,w\rangle\rangle\in \textbf{R}\}.$$
[Clearly  $\textbf{R}[z]$ is a magma. Concerning the equality of the two collections, inclusion $\subseteq$ is obvious. For the converse, let $\langle\langle z,w\rangle\rangle\in \textbf{R}$ and  $w'\in pr(w)$. Then  $w'\ \subseteq w$, so  $\langle\langle z,w'\rangle\rangle\subseteq \langle\langle z,w\rangle\rangle$ and hence  $\langle\langle z,w'\rangle\rangle\in \textbf{R}$ since $\textbf{R}$ is a magma. Therefore $w'\in \textbf{R}[z]$.]

Now the uniqueness condition for $\textbf{R}$ could be fulfilled if for each $z\in dom(\textbf{R}$) $\textbf{R}[z]$ contains a {\em greatest} element. In such a case we could define this greatest element to be the  value of $\textbf{R}$ at $z$ and denote it  $\textbf{R}(z)$.

\begin{Def} \label{D:greatest}
{\em A magmatic semi-function  $\textbf{R}$ is said to be a} magmatic function, {\em  or just a} function,  {\em if for every $z\in dom(\textbf{R})$ there is a greatest $w\in ran(\textbf{R})$ such that $\langle\langle z,w\rangle\rangle\in \textbf{R}$, i.e., a $w$ such that for every $w'\in \textbf{R}[z]$, $w'\subseteq w$. This greatest $w$ will be the} value of $\textbf{R}$ at $z$ {\em and will be denoted $\textbf{R}(z)$. We use symbols $\textbf{F}$, $\textbf{G}$ etc, to denote magmatic functions.}
\end{Def}
The simplest examples of  magmatic functions are the relations  generated by a single pair,  i.e.   of the form $\textbf{F}=pr(\langle\langle z,w\rangle\rangle)$   (recall the by Corollary  \ref{C:pair-produc}, $pr(\langle\langle z,w\rangle\rangle$ is also identical to the product $pr(z)\boxtimes pr(w)$). Then every $z'\in dom(\textbf{F})$ is related to $w$ and obviously $w$ is greatest with this property, so for every $z'\in dom(\textbf{F})$, $\textbf{F}(z')=w$. On the other hand if $\textbf{R}=pr(\langle\langle z,w_1\rangle\rangle)\cup pr(\langle\langle z,w_2\rangle\rangle)$ and $w_1$, $w_2$ are incomparable, i.e., $w_1\not\subseteq w_2$ and $w_2\not\subseteq w_1$, then $\textbf{R}$ is not a function, since $w_1,w_2\in  \textbf{R}[z]$ but there is no $w\in \textbf{R}[z]$ such that $w_1,w_2\subseteq w$. More generally, if $\textbf{R}=pr(\langle\langle z_1,w_1\rangle\rangle)\cup pr(\langle\langle z_2,w_2\rangle\rangle)$, where  $w_1$, $w_2$ are incomparable and $z_1\cap z_2\neq\emptyset$, $\textbf{R}$ is not a function. For if  $z_1\cap z_2\neq\emptyset$ then the latter is a magma\footnote{Recall that for two magmas $z_1, z_2$, $z_1\cap z_2$ can be empty, in which case  $z_1\cap z_2\notin M$. For example $pr(a_0)\cap pr(a_1)=\emptyset$ if there is no $b$ such that $b\preccurlyeq a_0$ and $b\preccurlyeq a_1$. It follows also by induction that for such $a_0,a_1$   $pr^n(a_0)\cap pr^n(a_1)=\emptyset$ for all  $n\geq 1$. On the other hand, if $z_1\cap z_2\neq\emptyset$ then it is  a magma because for each $w\in z_1\cap z_2$, $pr(w)\subseteq z_1\cap z_2$. }  and since $z_1\cap z_2\subseteq z_1,z_2$, it follows that both  $\langle\langle z_1\cap z_2,w_1\rangle\rangle)$ and  $\langle\langle z_1\cap z_2,w_2\rangle\rangle)$ are contained in $\textbf{R}$, so $w_1,w_2\in \textbf{R}[z_1\cap z_2]$, and since $w_1,w_2$ are incomparable, $\textbf{R}[z_1\cap z_2]$  does not contain a greatest element.

A sufficient condition for a semi-function to be a function is the following.

\begin{Lem} \label{L:sufficient}
Let $\textbf{R}$ be a semi-function generated by $\langle\langle z_i,w_i\rangle\rangle$, $i\in I$. If the set $\{z_i:i\in I\}$ is pairwise disjoint, then  $\textbf{R}$ is a function.
\end{Lem}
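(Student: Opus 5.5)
The plan is to show that for every $z\in dom(\textbf{R})$ the magma $\textbf{R}[z]$ has a greatest element, namely $w_i$ for the unique index $i$ with $z\subseteq z_i$.

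First, fix $z\in dom(\textbf{R})$. By Fact \ref{F:same}, $dom(\textbf{R})=\cup\{pr(z_i):i\in I\}$, so $z\subseteq z_i$ for some $i\in I$.

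Second, I check that this $z_i$ is unique. If also $z\subseteq z_j$, then $z\subseteq z_i\cap z_j$. Since $z$ is a magma it is nonempty (Proposition \ref{P:basics}), so $z_i\cap z_j\neq\emptyset$. Pairwise disjointness of $\{z_i:i\in I\}$ then forces $z_i=z_j$. Condition (\ref{E:unique}) gives $w_i=w_j$, so the candidate value $w_i$ depends only on $z$.

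Third, I show that $w_i$ is greatest in $\textbf{R}[z]$. By Lemma \ref{L:pairinpair}(a), $\langle\langle z,w_i\rangle\rangle\subseteq\langle\langle z_i,w_i\rangle\rangle$, so $\langle\langle z,w_i\rangle\rangle\in pr(\langle\langle z_i,w_i\rangle\rangle)\subseteq\textbf{R}$, hence $w_i\in\textbf{R}[z]$. Now take any $w'\in\textbf{R}[z]$, so $\langle\langle z,w'\rangle\rangle\in\textbf{R}$. By the definition of $\textbf{R}$, $\langle\langle z,w'\rangle\rangle\in pr(\langle\langle z_j,w_j\rangle\rangle)$ for some $j$. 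Lemma \ref{L:pairinpair}(a) then gives $z\subseteq z_j$ and $w'\subseteq w_j$. By the uniqueness step, $z_j=z_i$ and therefore $w_j=w_i$, so $w'\subseteq w_i$. Thus $w_i$ is the greatest element of $\textbf{R}[z]$, and $\textbf{R}(z)=w_i$.

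The main obstacle is the uniqueness step, and the only subtle point there is the nonemptiness of $z$, which the paper provides since magmas are nonempty. One also has to remember that membership of a pair in $\textbf{R}$ means membership in some generator's $pr$, so that Lemma \ref{L:pairinpair}(a) applies. The disjointness hypothesis should be read as: distinct generators $z_i\neq z_j$ are disjoint, with repeated $z_i$'s allowed and handled by (\ref{E:unique}).
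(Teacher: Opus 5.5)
Your proof is correct and follows the same route as the paper: disjointness of the $z_i$, together with nonemptiness of magmas, forces every pair $\langle\langle z,w\rangle\rangle\in\textbf{R}$ to lie below a unique generator, and that generator's second coordinate $w_i$ is then the greatest element of $\textbf{R}[z]$ by Lemma \ref{L:pairinpair}(a). Your version is a bit more careful than the paper's: you explicitly check $w_i\in\textbf{R}[z]$, and you use (\ref{E:unique}) to handle repeated values $z_i=z_j$ with $i\neq j$, a case the paper tacitly excludes.
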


{\em Proof.}  Let $\textbf{R}$ have the mentioned property. We have to show that for every $z\in dom(\textbf{R})$, $\textbf{R}[z]$ has a greatest element. Now if $\langle\langle z,w\rangle\rangle$ is any pair of $\textbf{R}$, necessarily $\langle\langle z,w\rangle\rangle\subseteq \langle\langle z_i,w_i\rangle\rangle$ for a  unique $i$. Because if  $\langle\langle z,w\rangle\rangle$ is a subset of both $\langle\langle z_i,w_i\rangle\rangle$ and $\langle\langle z_j,w_j\rangle\rangle$, with  $i\neq j$, we should have $z\subseteq z_i\cap z_j$, which is false since by assumption $z_i,z_j$ are disjoint. It follows that for every $\langle\langle z,w\rangle\rangle\in \textbf{R}$, the greatest element of $\textbf{R}[z]$ is the (unique) $w_i$ for which  $\langle\langle z,w\rangle\rangle\subseteq \langle\langle z_i,w_i\rangle\rangle$. \telos

\vskip 0.2in

Notice however that the condition of Lemma \ref{L:sufficient} is not necessary in order for $\textbf{R}$ to be a function. The simplest counterexample is any semi-function $\textbf{R}$ which is generated by any  set of pairs of the form $\{\langle\langle z_i,w\rangle\rangle:i\in I\}$, i.e., such that $ran(\textbf{R})$ is generated by $\{w\}$.  A less trivial example is the following.

\begin{Ex} \label{E:counter}
Let $\textbf{R}=pr(\langle\langle z_1,w_1\rangle\rangle)\cup pr(\langle\langle z_2,w_2\rangle\rangle)$ where:

(1) $z_1,z_2$ are incomparable, (2) $z_1\cap z_2\neq \emptyset$,  and (3) $w_1\varsubsetneq w_2$.

\noindent Then $\textbf{R}$ is a function.
\end{Ex}

The  verification of \ref{E:counter} is left to the reader. \\

[Hint: Let $\langle\langle z,w\rangle\rangle\in \textbf{R}$ be  any pair. Consider the cases: \\
(a) $\langle\langle z,w\rangle\rangle\in pr(\langle\langle z_1,w_1\rangle\rangle)$ and $\langle\langle z,w\rangle\rangle\in pr(\langle\langle z_2,w_2\rangle\rangle)$. \\
(b) $\langle\langle z,w\rangle\rangle\in pr(\langle\langle z_1,w_1\rangle\rangle)$ and $\langle\langle z,w\rangle\rangle\notin pr(\langle\langle z_2,w_2\rangle\rangle)$. \\
(c) $\langle\langle z,w\rangle\rangle\in pr(\langle\langle z_2,w_2\rangle\rangle)$ and $\langle\langle z,w\rangle\rangle\notin pr(\langle\langle z_1,w_1\rangle\rangle)$.]

Although the condition of Lemma \ref{L:sufficient} (the disjointness of the magmas that generate the domain of $\textbf{R}$) is very restrictive, we shall see  in the next section that it can actually be  used in some cases.

It is clear that in  the vast majority of situations, semi-functions are not functions, and  the real source of the problem lies in  the  {\em way}  magmatic pairs were defined in section 3, which  gives rise to the existence of a multitude, actually an infinity, of sub-pairs to any given pair $\langle\langle x,y\rangle\rangle$, namely to $\langle\langle x',y'\rangle\rangle$ for any $x'\subseteq x$ and $y'\subseteq y$.  It is exactly  this property which causes  the occurrence of dependent  unintended (collateral) pairs inside magmatic relations.  So the question is:

\vskip 0.1in

(\dag)  Is it  possible to define magmatic pairs so that for a given $\langle\langle x,y\rangle\rangle$, there can be no pair $\langle\langle x',y'\rangle\rangle$ with $x'\neq x$ or $y'\neq y$ such that  $\langle\langle x',y'\rangle\rangle\subseteq \langle\langle x,y\rangle\rangle$?

\vskip 0.1in

I strongly guess that the answer to question (\dag) is negative.

\section{Magmatic  natural and ordinal numbers in $\widehat{M}$}
The primitive constituent for the construction of natural and ordinal numbers in ${\rm ZF}$ (according to von  Neumann's definition)  is $\emptyset$. Identifying $\emptyset$ with $0$ we define  the rest numbers setting $1=\{0\}$, $2=\{0,1\}$, etc.  In the absence of $\emptyset$ in $M$, we shall start with an  atom $a_0\in A$, which we choose once and for all and keep it fixed  henceforth. The element  $a_0$ is the starting point but not exactly the ``analogue'' of $\emptyset$ and $0$.   Below we shall denote magmatic natural numbers by boldface versions $\textbf{0}, \textbf{1},\ldots,\textbf{n},\ldots$ of the corresponding usual numbers $0,1,\ldots,n,\ldots$, and more generally magmatic  ordinals  by boldface versions $\boldsymbol{\omega, \alpha,\beta,}\ldots$ of the corresponding usual  ordinals $\omega,\alpha,\beta,\ldots$. We denote by $\textbf{n}'$ and $\boldsymbol{\alpha'}$   the successors of $\textbf{n}$ and $\boldsymbol{\alpha}$, respectively.

At first thought  the  magmatic number  $\textbf{0}$ should be defined as the simplest magma containing $a_0$,  $pr(a_0)$, just as $0$ is defined as $\emptyset$.  However  representing $\textbf{0}$ by $pr(a_0)$ would not allow us to define a greater number  $\textbf{n}$ so as to include $\textbf{0}$ as a submagma, in  the form  $\textbf{n}=\textbf{0}\cup x=pr(a_0)\cup x$, where $x\in M\backslash M_1$.  Because in such a case, by Lemma \ref{L:legitim} (iii), $\textbf{n}\notin M$  since   $pr(a_0)\in M_1$ while $\boldsymbol{n}\backslash pr(a_0)$ would belong to $M\backslash M_1$. So we shall set instead $\textbf{0}=pr^2(a_0)=pr(pr(a_0))$.  Concerning the definition of the successors $\textbf{n}'$ and $\boldsymbol{\alpha'}$, we imitate  the standard definition  $n+1=n\cup\{n\}$ for natural numbers and set $\textbf{n}'=\textbf{n}\cup pr(\textbf{n})$, and similarly for ordinals, since  the least magma $pr(\textbf{n})$ that contains  $\textbf{n}$ is the analogue of   the singleton $\{n\}$.   So  we define $\textbf{n}$ and $\boldsymbol{\alpha}$ in ${\rm ZFA}$ by  induction on $n\in \N$ and $\alpha\in Ord$, respectively, as follows:

\vskip 0.1in

$\textbf{0}:=pr^2(a_0)$,

$\textbf{1}:=\textbf{0}\cup pr(\textbf{0})$,

$\textbf{2}:=\textbf{1}\cup pr(\textbf{1})$,

$\textbf{n+1}:=\textbf{n}\cup pr(\textbf{n})$.

\vskip 0.1in

\noindent Further for the infinite  magmatic ordinals we set:

\vskip 0.1in

$\boldsymbol{\omega}:=\cup\{\textbf{n}:n<\omega\}$,

$\boldsymbol{\alpha+1}:=\boldsymbol{\alpha}\cup pr(\boldsymbol{\alpha})$,

$\boldsymbol{\alpha}:=\cup\{\boldsymbol{\beta}:\beta<\alpha\}$, for limit $\alpha$.

\vskip 0.1in

By the same token, extending the last definition, we define the addition of magmatic ordinals by setting:

\vskip 0.1in

$\boldsymbol{\alpha+0}=\boldsymbol{\alpha}$.

$\boldsymbol{\alpha + (\beta+1)}=\boldsymbol{(\alpha+\beta)+1}:=\boldsymbol{(\alpha+\beta)}\cup pr(\boldsymbol{\alpha+\beta})$ and

$\boldsymbol{\alpha + \beta}:=\cup\{\boldsymbol{\alpha + \gamma}:\gamma<\beta\}$, for limit $\beta$.

\vskip 0.1in

(Multiplication $\boldsymbol{\alpha\cdot\beta}$ is defined as well by the help of the just defined  $\boldsymbol{\alpha+\beta}$, exactly as  with standard ordinals, but we shall not need it below).

Let $\textbf{N}=\{\textbf{n}:n\in\N\}$ and $\textbf{Ord}:=\{\boldsymbol{\alpha}:\alpha\geq 1\}$. Clearly $\textbf{N}\subset M$, and $\textbf{Ord}\subset M$, the latter being a proper class. Also    $\textbf{N}\notin M$ but $\textbf{N}\in M^{\rm seq}$, and hence $\textbf{N}\in{\widehat  M}$, because it is the range of the function $f\in M^{\N}$ such that $f(n)=\textbf{n}$.

\vskip 0.1in

As is well-known, the order relation of ordinals is $\in$, i.e., $\alpha<\beta\Leftrightarrow \alpha\in\beta$. Concerning the magmatic ordinals, the corresponding relation is the proper inclusion $\subset$. Namely the following holds.

\begin{Lem} \label{L:ordering}
For all $\alpha,\beta\in Ord$, $\alpha<\beta \Leftrightarrow \boldsymbol{\alpha}\subset \boldsymbol{\beta}$.
\end{Lem}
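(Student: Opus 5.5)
We prove the two directions separately. Monotonicity ($\alpha\le\beta\Rightarrow\boldsymbol{\alpha}\subseteq\boldsymbol{\beta}$) comes from an induction on $\beta$. Strictness comes from a membership fact:
$$\boldsymbol{\alpha}\in\boldsymbol{\alpha+1},\qquad \boldsymbol{\alpha}\notin\boldsymbol{\alpha}.$$
The first membership holds because $\boldsymbol{\alpha}\in pr(\boldsymbol{\alpha})\subseteq \boldsymbol{\alpha+1}$. The second follows from well-foundedness of $\in$ in $V(A)$. Before anything else we check, by the same induction, that every $\boldsymbol{\alpha}$ is a magma in $M\backslash M_1$. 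For $\textbf{0}=pr^2(a_0)\in M_2$ this is clear. Successors are unions of two members of $M\backslash M_1$, handled by Lemma \ref{L:legitim}(ii). At limits we take a union of members of $M\backslash M_1$. Each $pr(y)$ it contains lies inside some $\boldsymbol{\beta}$, so openness is inherited, and the union sits inside some $M_\gamma$ for a limit $\gamma$, which gives membership in $M_{\gamma+1}$ as in the proof of Lemma \ref{L:legitim}(ii). This verification is only needed so that $pr(\boldsymbol{\alpha})={\cal P}(\boldsymbol{\alpha})\cap M$ makes sense.

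\emph{Monotonicity.} Fix $\alpha$ and induct on $\beta\ge\alpha$. The case $\beta=\alpha$ is trivial. If $\beta=\gamma+1$, then $\boldsymbol{\alpha}\subseteq\boldsymbol{\gamma}\subseteq\boldsymbol{\gamma}\cup pr(\boldsymbol{\gamma})=\boldsymbol{\beta}$. If $\beta$ is a limit, then $\boldsymbol{\alpha}\subseteq\boldsymbol{\alpha+1}\subseteq\cup\{\boldsymbol{\gamma}:\gamma<\beta\}=\boldsymbol{\beta}$, because $\alpha+1<\beta$. The case $\beta=\omega$ is covered by the same argument, since $\boldsymbol{\omega}$ is defined as the same kind of union.

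\emph{($\Rightarrow$)} Suppose $\alpha<\beta$. Then $\alpha+1\le\beta$, so monotonicity gives $\boldsymbol{\alpha}\in\boldsymbol{\alpha+1}\subseteq\boldsymbol{\beta}$. Hence $\boldsymbol{\alpha}\in\boldsymbol{\beta}$, while $\boldsymbol{\alpha}\notin\boldsymbol{\alpha}$. This forces $\boldsymbol{\alpha}\neq\boldsymbol{\beta}$, and together with $\boldsymbol{\alpha}\subseteq\boldsymbol{\beta}$ (monotonicity again) we get $\boldsymbol{\alpha}\subset\boldsymbol{\beta}$.

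\emph{($\Leftarrow$)} Suppose $\boldsymbol{\alpha}\subset\boldsymbol{\beta}$ but $\alpha\not<\beta$. By trichotomy of ordinals, either $\beta=\alpha$ or $\beta<\alpha$. If $\beta=\alpha$, then $\boldsymbol{\alpha}=\boldsymbol{\beta}$, contradicting properness. If $\beta<\alpha$, the ($\Rightarrow$) direction gives $\boldsymbol{\beta}\subset\boldsymbol{\alpha}$, contradicting $\boldsymbol{\alpha}\subset\boldsymbol{\beta}$.

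The only delicate point I expect is the claim $\boldsymbol{\alpha}\notin\boldsymbol{\alpha}$, but it is immediate from Foundation in ${\rm ZFA}$. The bookkeeping at limit stages, namely showing that $\boldsymbol{\alpha}\in M$, is routine but should be stated.
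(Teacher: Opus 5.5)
Your proof is correct, and it takes a different route from the paper in both directions.

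For ($\Rightarrow$), the paper writes $\beta=\alpha+\gamma$ with $\gamma>0$ and proves $\boldsymbol{\alpha}\subset\boldsymbol{\alpha+\gamma}$ by induction along the recursive clauses of magmatic addition. In doing so it tacitly identifies the magmatic sum with the magmatic ordinal of $\alpha+\gamma$, and it takes the properness of $\boldsymbol{\alpha}\subset\boldsymbol{\alpha+1}$ for granted. You instead get monotonicity by a direct induction on $\beta$. You then obtain strictness from an explicit witness: $\boldsymbol{\alpha}\in pr(\boldsymbol{\alpha})\subseteq\boldsymbol{\alpha+1}\subseteq\boldsymbol{\beta}$, while $\boldsymbol{\alpha}\notin\boldsymbol{\alpha}$ by Foundation. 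This is exactly the justification the paper omits.

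For ($\Leftarrow$), the paper unfolds $\boldsymbol{\beta}$ as a union of canonical pieces $x_\xi$. It then asserts that the submagmas of $\boldsymbol{\beta}$ which are magmatic ordinals are exactly the unions of initial segments of $(x_\xi)_\xi$. That would give extra structural information about $\boldsymbol{\beta}$, but the claim is stated rather than proved. Its indexing is also off by one at infinite successors; for example $\boldsymbol{\omega+1}=\cup\{x_\xi:\xi\leq\omega\}$, not $\cup\{x_\xi:\xi\leq\omega+1\}$. Your argument is trichotomy plus ($\Rightarrow$), i.e. the general fact that a strictly increasing map on a linear order reflects the order. It needs no analysis of the internal structure of $\boldsymbol{\beta}$ and is fully rigorous.

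Keep your preliminary check that each $\boldsymbol{\alpha}$ lies in $M\backslash M_1$, which the paper treats as clear. It is needed for more than you say: besides making $pr(\boldsymbol{\alpha})$ the intended magma, it is what gives $\boldsymbol{\alpha}\in pr(\boldsymbol{\alpha})$, on which your strictness argument rests.
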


{\em Proof.} $\Rightarrow$: As is well-known  $\alpha<\beta$ iff there is a $\gamma>0$ such that $\beta=\alpha+\gamma$, so it suffices to show that for all $\alpha, \beta$ with $\beta>0$, $\boldsymbol{\alpha}\subset \boldsymbol{\alpha+\beta}$. This is shown by an easy induction using the clauses above that define  $\boldsymbol{\alpha+\beta}$. That is (i) $\boldsymbol{\alpha}\subset \boldsymbol{\alpha+1}$, (ii) $\boldsymbol{\alpha}\subset \boldsymbol{\beta}\Rightarrow \boldsymbol{\alpha}\subset \boldsymbol{\beta+1}$, and (iii) $(\forall \gamma<\beta)(\boldsymbol{\alpha}\subseteq \boldsymbol{\gamma})\Rightarrow \boldsymbol{\alpha}\subset \boldsymbol{\beta}$, for limit $\beta$.

$\Leftarrow$: Let $\boldsymbol{\alpha}\subset \boldsymbol{\beta}$. If we unfold the inductive definition of $\boldsymbol{\beta}$, we see that it has a unique  expansion     as  the  union $\boldsymbol{\beta}=\cup\{x_\xi:\xi\leq\beta\}$, if $\beta$ is successor, or $\boldsymbol{\beta}=\cup\{x_\xi:\xi<\beta\}$, if $\beta$ is limit, where:

(i) $x_0=pr^2(a_0)$,

(ii)  $x_{\xi+1}=pr(\cup\{x_\zeta:\zeta\leq \xi\})$ and

(iii) $x_\xi=pr(\cup\{x_\zeta:\zeta<\xi\}$, if $\xi$ is limit. \\
Moreover, in order for a sub-magma $y\subset \boldsymbol{\beta}$ to be a magmatic ordinal  it is necessary and sufficient that for some $\alpha<\beta$, $y$ is the union of the $\alpha$-initial segment of $(x_\xi)_\xi$, to be precise  $y=\cup\{x_\xi:\xi\leq \alpha\}$, if $\alpha$ is successor,  or $y=\cup\{x_\xi:\xi<\alpha\}$, if $\alpha$ is limit. In addition in this case clearly  $y=\boldsymbol{\alpha}$. Since $\alpha<\beta$, this proves that $\boldsymbol{\alpha}\subset \boldsymbol{\beta}\Rightarrow \alpha<\beta$. \telos

\vskip 0.2in

It follows from the previous Lemma that the relation $\subset$ can be used as a definition for the ordering $\boldsymbol{<}$ of magmatic ordinals.

\vskip 0.1in

\textbf{An alternative definition.} An alternative way to define natural and ordinal numbers would be to set for successor ordinals $\boldsymbol{\alpha}'=pr(\alpha)$, instead of $\boldsymbol{\alpha}\cup pr(\boldsymbol{\alpha})$, the other parts of the definition being as before, i.e., $\boldsymbol{0}=pr^2(a_0)$ and $\boldsymbol{\alpha}=\cup\{\boldsymbol{\beta}:\beta<\alpha\}$, for limit $\alpha$. In particular, for magmatic natural numbers we set:

\vskip 0.1in

$\textbf{0}:=pr^2(a_0)$,

$\textbf{1}:=pr(\textbf{1})=pr^3(a_0)$,

$\textbf{n}':=pr(\textbf{n})$, so, by induction on $\N$,

$\textbf{n}=pr^{n+2}(a_0)$ for all $n\in\N$.

\vskip 0.1in

Note that this definition is analogous to the alternative definition of standard natural numbers according to which  $0=\emptyset$, $1=\{0\}$, $2=\{1\}=\{\{0\}\}$,  and generally $n+1=\{n\}$, i.e., $n=\{\{\cdots\{0\}\cdots\}\}$ for all $n\in\N$, which is used by many authors.

Let $\N^*$ be the set (in $M^{\rm seq}$) of these alternative natural  numbers.
An ``advantage'' of $\N^*$ over $\N$  for our context is that it allows us to define countably generated magmas and show that they are ranges of functions with domain $\cup\{pr(\textbf{n}):\textbf{n}\in \N^*\}$.  Fist we have the following.

\begin{Fac} \label{F:disjoint}
For all $\textbf{m}\neq \textbf{n}\in \N^*$,  $\textbf{m}\cap \textbf{n}=\emptyset$.
\end{Fac}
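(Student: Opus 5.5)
We need that for $m\neq n$, $pr^{m+2}(a_0)\cap pr^{n+2}(a_0)=\emptyset$. The plan is to show, by induction on $k\geq 1$, that $pr^k(a_0)\in M_k\setminus\bigcup_{j<k}M_j$ and $pr^k(a_0)\subseteq M_{k-1}\setminus \bigcup_{j<k-1}M_j$ (with the convention $M_0=A$). Loosely, every element of $pr^k(a_0)$ has magmatic rank exactly $k-1$, read as ``is an atom'' when $k=1$. Two such sets with different $k$ then consist of elements of different rank and so are disjoint.

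\textbf{Base.} $pr(a_0)\subseteq A$, and $pr^2(a_0)={\cal P}(pr(a_0))\cap M\subseteq M_1$ by (\ref{E:relPow}). Since $A\cap M=\emptyset$, the elements of $pr(a_0)$ (atoms) and the elements of $pr^2(a_0)$ (members of $M_1$) are already of different kinds.

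\textbf{Inductive step.} Suppose every element of $pr^k(a_0)$ lies in $M_{k-1}$ but in no $M_j$ with $j<k-1$. An element $y\in pr^{k+1}(a_0)$ is a magma with $y\subseteq pr^k(a_0)\subseteq M_{k-1}$, so $y\in M_k$ by Proposition~\ref{P:powrank}(v) (or by Proposition~\ref{P:basics}(ii) when $k=1$).

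We then need $y\notin M_j$ for $j<k$. If $y\in M_j$ with $1\le j<k$, then $y\subseteq M_{j-1}$ (or $y\subseteq A$ when $j=1$). But $y$ is nonempty, so it contains some element of $pr^k(a_0)$, and by the induction hypothesis that element is not in $M_{j-1}$ for $j-1<k-1$, nor is it an atom when $k\ge 2$. This is a contradiction. The case $k=1$ is handled by $A\cap M=\emptyset$.

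\textbf{Conclusion.} For $m<n$, every element of $\textbf{m}=pr^{m+2}(a_0)$ has rank $m+1$, while every element of $\textbf{n}$ has rank $n+1\neq m+1$, so $\textbf{m}\cap\textbf{n}=\emptyset$.

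\textbf{Main obstacle.} The delicate point is the exact-rank claim, i.e.\ ruling out that a subset of $M_{k-1}$ also sits in a lower $M_j$. This relies on the levels $M_j$ being cumulative ($M_1\subseteq M_2\subseteq\cdots$) and on the identification $pr(x)={\cal P}(x)\cap M$ of Corollary~\ref{C:identical}.

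I would use the lowest-element argument as stated. Alternatively, one could define $r(y)$ as the least $j$ with $y\in M_j$ and prove $r(y)=k$ for $y\in pr^{k+1}(a_0)$ by observing that $r$ of a magma is one more than the supremum of $r$ over its elements, for finite levels. This avoids any need for incomparability assumptions on $a_0$.
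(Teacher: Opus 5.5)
Your proof is correct and is essentially the paper's. The paper also shows by induction that $\textbf{n}=pr^{n+2}(a_0)\subseteq M_{n+1}$, but then simply cites Lemma 4.8 of \cite{Tz24} ($M_{m+1}\cap M_{n+1}=\emptyset$ for $m\neq n$); your ``exact level'' induction uses only $M_{j+1}\subseteq{\cal P}(M_j)$ and $A\cap M=\emptyset$, and so reproves the special case of that lemma you need.

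One correction to your commentary: the finite levels are not cumulative but pairwise disjoint. For example, $M_1\subseteq{\cal P}(A)$ and $M_2\subseteq{\cal P}(M_1)$ share no element, which is exactly what the cited lemma says, so your ``delicate point'' is in fact automatic. Your induction never actually uses cumulativity, so the argument stands.
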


{\em Proof.}  Recall that $pr(a_0)\in M_1$, so $pr^2(a_0)=\textbf{0}\in M_2$, and hence  $\textbf{0}\subseteq M_1$. By induction we easily see that for every $n\geq 1$, $\textbf{n}=pr^{n+2}(a_0)\subseteq M_{n+1}$. By Lemma 4.8 of \cite{Tz24}, $M_{m+1}\cap M_{n+1}=\emptyset$ for all $m\neq n$, so $\textbf{m}\cap \textbf{n}=\emptyset$. \telos

\vskip 0.2in

Notice that Fact \ref{F:disjoint} is not true for ordinals in general, since in general $\boldsymbol{\alpha}\subseteq M_\alpha$ for every $\alpha$, but the levels $M_\alpha$, $M_\beta$ are not always disjoint. For example  by the definition $\boldsymbol{\omega}=\cup\{\textbf{n}:\textbf{n}\in\N^*\}$,   $\textbf{n}\subseteq\boldsymbol{\omega}$ for every $n$, and more generally, for every   limit ordinal $\alpha$ and every $\beta<\alpha$, $\boldsymbol{\beta}\subseteq \boldsymbol{\alpha}$.

\begin{Def} \label{D:countgen}
{\em A magma $x$ is said to be} countably generated {\em if there is a sequence $\{y_n:n\in\omega\}$ in $M^{\rm seq}$ such that $x=\cup\{pr(y_n):n\in \omega\}$.}
\end{Def}

\begin{Prop} \label{P;countfunc}
If $x\in M$ is countably generated,   there is a magmatic function $\textbf{F}$ such that $dom(\textbf{F})=\boldsymbol{\omega}\backslash\boldsymbol{0}$ and $ran(\textbf{F})=x$.
\end{Prop}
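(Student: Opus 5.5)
Let $x=\cup\{pr(y_n):n\in\omega\}$ be a countably generated magma, with generators $\{y_n:n\in\omega\}\in M^{\rm seq}$. The plan is to pair the $n$-th alternative natural number with the $n$-th generator. Here the alternative natural numbers are those of $\N^*$, namely $\textbf{n}=pr^{n+2}(a_0)$. I would index from $\textbf{1}$ on, so that the domain becomes $\boldsymbol{\omega}\backslash\boldsymbol{0}$. Define
$$\textbf{F}:=\cup\{pr(\langle\langle \textbf{n+1},y_n\rangle\rangle):n\in\omega\}.$$
First I would check that $\textbf{F}\in M$. Each $pr(\langle\langle \textbf{n+1},y_n\rangle\rangle)$ lies in $M\backslash M_1$, so a countable union of them is a subset of some $M_\gamma$ and is lower open. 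This is the same argument as Lemma \ref{L:legitim}(ii): take $\gamma$ a limit ordinal above all the ranks involved. Note that $y_n\in M$ and $\textbf{n+1}\in M$, so the pairs are defined. The first constituents $\textbf{n+1}$ are pairwise distinct elements of $\N^*$. Hence the generating set trivially satisfies condition (\ref{E:unique}), and $\textbf{F}$ is a semi-function.

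Next, by Fact \ref{F:disjoint}, the magmas $\textbf{n+1}$, $n\in\omega$, are pairwise disjoint. Lemma \ref{L:sufficient} therefore applies directly, and $\textbf{F}$ is a magmatic function with $\textbf{F}(z)=y_n$ for every $z\subseteq \textbf{n+1}$. Fact \ref{F:same} then gives $ran(\textbf{F})=\cup\{pr(y_n):n\in\omega\}=x$, as required, and also
$$dom(\textbf{F})=\cup\{pr(\textbf{n}):n\geq 1\}.$$

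It remains to identify this domain with $\boldsymbol{\omega}\backslash\boldsymbol{0}$. Here $\boldsymbol{\omega}$ is taken in the sense used after Fact \ref{F:disjoint}, namely $\boldsymbol{\omega}=\cup\{\textbf{n}:\textbf{n}\in\N^*\}$, with $\textbf{0}=pr^2(a_0)$. I expect this to be the main obstacle, since it is pure bookkeeping about levels. The key observation is $pr(\textbf{n})=\textbf{n+1}$, because $\textbf{n+1}=pr(\textbf{n})$ in the alternative definition. Consequently
$$\cup\{pr(\textbf{n}):n\geq 0\}=\cup\{\textbf{n}:n\geq 1\},$$
and by the disjointness of Fact \ref{F:disjoint} this equals $\boldsymbol{\omega}\backslash\textbf{0}$. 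So the natural choice is to use first constituents $\textbf{n}$ with $n\geq 0$ rather than $\textbf{n+1}$. That is, I would set
$$\textbf{F}:=\cup\{pr(\langle\langle \textbf{n},y_n\rangle\rangle):n\in\omega\},$$
whose domain is $\cup_n pr(\textbf{n})=\cup_{n\geq1}\textbf{n}=\boldsymbol{\omega}\backslash\boldsymbol{0}$. The disjointness of the $\textbf{n}$ still gives the function property via Lemma \ref{L:sufficient}.

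Two points remain to double-check. First, $\boldsymbol{\omega}\backslash\boldsymbol{0}$ should be nonempty and lower open; it is, being a union of the open sets $\textbf{n}$ for $n\geq1$. Second, none of the sets $pr(\textbf{n})$ should meet $\textbf{0}$. This holds because $pr(\textbf{n})=\textbf{n+1}\subseteq M_{n+2}$, which is disjoint from $M_1\supseteq\textbf{0}$ by Lemma 4.8 of \cite{Tz24}.
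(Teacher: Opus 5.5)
Your final construction, $\textbf{F}=\cup\{pr(\langle\langle \textbf{n},y_n\rangle\rangle):n\in\omega\}$, is exactly the paper's proof, and so is your argument for it. The disjointness of the $\textbf{n}\in\N^*$ (Fact \ref{F:disjoint}) feeds Lemma \ref{L:sufficient}, and Fact \ref{F:same} gives range $x$ and domain $\cup_{n\geq 0}pr(\textbf{n})=\cup_{n\geq 1}\textbf{n}=\boldsymbol{\omega}\backslash\textbf{0}$. You keep the index right where the paper's displayed chain slips to $n\geq 1$, and you add the check that $\textbf{F}\in M$, which the paper omits. Delete the false start with first constituents $\textbf{n+1}$ from the write-up: that choice gives domain $\boldsymbol{\omega}\backslash(\textbf{0}\cup\textbf{1})$, and its value formula $\textbf{F}(z)=y_n$ should read $z\subseteq\textbf{n}$ in the final version.
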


{\em Proof.} Let  $x=\cup\{pr(y_n):n\in\omega\}$ be a countably generated magma, and let
$$\textbf{F}=\cup\{pr(\langle\langle \textbf{n},y_n\rangle\rangle:n\geq 0\}.$$
Obviously $\textbf{F}$ is a semi-function. Since further, by Fact \ref{F:disjoint}, $\{\textbf{n}:n\geq 0\}$ is a pairwise disjoint set, it follows from Lemma \ref{L:sufficient} that  $\textbf{F}$ is a function. By Fact \ref{F:same},
$$ran(\textbf{F})=\cup\{pr(y_n):n\in \omega\}=x.$$
Also, by the definition of $\boldsymbol{\omega}=\cup\{\boldsymbol{n}:n\geq 0\}=\cup\{pr^{n+2}(a_0):n\geq 0\}$, we have   $$dom(\textbf{F})=\cup\{pr(\boldsymbol{n}):n\geq 1\}= \cup\{pr^{n+3}(a_0):n\geq 0\}=\boldsymbol{\omega}\backslash pr^2(a_0)=\boldsymbol{\omega}\backslash\boldsymbol{0}.$$ \telos

\section{The question about  Separation and Replacement in $M$ }
In \cite{Tz24} we showed that some  axioms of ${\rm ZFC}$ are still true in $\langle M,\in\rangle$. Namely, Extensionality and Foundation hold trivially, while Powerset  and a weak form of Union were proven as  nontrivial truths of $M$. Of the rest  axioms,  Emptyset, Pairing, Infinity and Choice  are definitely (and irreparably)  false.  What remains are  {\em Separation} and {\em Replacement}  (abbreviated {\em Sep} and {\em Rep}, respectively). Of them  {\em Sep}  is clearly false in $M$ in its  general form, but we shall see that a restricted form, that is  a scheme where $\phi$ ranges over a special class of formulas, does hold in $M$. On the other hand  there can be no such restricted form of {\em Rep}, a fact  due to the problems with the definition of  functions in $M$ that  we saw  in section 4.1.

For a class $X\subseteq M$ definable by a formula $\phi(x)$, possibly with parameters, that is $X=\{x:M\models  \phi(x)\}$, we write $X=X_{\phi}$. The truth of $Sep$ in $M$ for the formula $\phi(x)$ has the usual meaning, i.e., for any set $u\in M$, $X_\phi\cap u\in M$, provided $X_\phi\cap u\neq \emptyset$, since $\emptyset\not\in M$. To see  that $Rep$ is false in $M$ in its general form, take for example the formula $\phi(x)=(x=y_0)$, for some magma $y_0$ as a parameter, and let $u$ be any magma. Then $X_\phi\cap u=\{x:x=y_0\wedge x\in u\}$. If $y_0\in u$ then $X_\phi\cap u=\{y_0\}\not\in M$. If $y_0\notin u$, then again $X_\phi\cap u=\emptyset\notin M$.

The question is whether {\em Sep} can hold in $M$ for the classes defined by an appropriate class   of properties $\phi(x)$. The following is a simple necessary and sufficient condition in order for {\em Sep} to hold in $M$ for a specific property  $\phi(x)$.

\begin{Lem} \label{L:iff}
{\em Sep}  holds in $M$ for  $\phi(x)$, if and only if for every  $x\in X_{\phi}$, $pr(x)\subseteq X_{\phi}$, i.e.,
\begin{equation} \label{E:condition}
x\in X_{\phi}  \wedge  y\subseteq x \rightarrow y\in X_{\phi}.
\end{equation}
\end{Lem}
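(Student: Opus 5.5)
The plan is to check each direction directly against the characterization of membership in a successor level given in Proposition \ref{P:basics} (ii). Throughout, $X_\phi\subseteq M$, and in (\ref{E:condition}) the variable $y$ ranges over $M$. Recall that \emph{Sep} for $\phi$ means: for every $u\in M$ with $X_\phi\cap u\neq\emptyset$, we have $X_\phi\cap u\in M$.

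($\Rightarrow$) Assume \emph{Sep} holds for $\phi$. Let $x\in X_\phi$ and $y\in M$ with $y\subseteq x$.
\begin{itemize}
\item Take $u=pr(x)$. It belongs to $M$ by Proposition \ref{P:powrank} (iv), and it contains $x$.
\item Hence $w:=X_\phi\cap u$ is nonempty, so by \emph{Sep} we get $w\in M$.
\item Since $w\subseteq M$, we have $w\notin M_1$, so $w\in M_{\beta+1}$ for some $\beta\geq 1$, with $x\in w\subseteq M_\beta$.
\item Because $w$ is open in $M_\beta$, Proposition \ref{P:basics} (ii) gives $pr_\beta(x)\subseteq w$.
\item By Corollary \ref{C:identical}, applicable since $x\in M_\beta$, we have $pr_\beta(x)={\cal P}(x)\cap M$, and this set contains $y$.
\item Therefore $y\in w\subseteq X_\phi$, which is (\ref{E:condition}).
\end{itemize}
The only delicate point here is justifying $pr_\beta(x)={\cal P}(x)\cap M$. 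It requires $\beta\geq rank_M(x)+1$, which holds automatically because $x\in M_\beta$. If $\beta$ is a limit ordinal, the limit case in the proof of Corollary \ref{C:identical} covers it.

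($\Leftarrow$) Assume (\ref{E:condition}) holds, and let $u\in M$ with $X_\phi\cap u\neq\emptyset$.
\begin{itemize}
\item If $u\in M_1$, then $u\subseteq A$ and $A\cap M=\emptyset$. Since $X_\phi\subseteq M$, this gives $X_\phi\cap u=\emptyset$, so this case is vacuous.
\item Otherwise, $u\in M_{\alpha+1}$ for some $\alpha\geq1$. Every element of $M$ lies in some successor level, since limit levels are unions of earlier ones.
\end{itemize}
In the second case, we verify the three conditions of Proposition \ref{P:basics} (ii) for $z:=X_\phi\cap u$.
\begin{itemize}
\item $z\subseteq u\subseteq M_\alpha$.
\item $z\neq\emptyset$ by hypothesis.
\item For $y\in z$, we need ${\cal P}(y)\cap M_\alpha\subseteq z$. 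Any $y'\in{\cal P}(y)\cap M_\alpha$ lies in $u$, because $u$ is open in $M_\alpha$. It also lies in $X_\phi$, by (\ref{E:condition}) applied to $y\in X_\phi$ and $y'\subseteq y$ with $y'\in M$.
\end{itemize}
Hence $z\in M_{\alpha+1}\subseteq M$, as required.

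Both directions are essentially bookkeeping. The main obstacle is keeping the level indices consistent, namely ensuring that the relevant $pr_\beta(x)$ coincides with ${\cal P}(x)\cap M$; Corollary \ref{C:identical} is designed to handle exactly this.
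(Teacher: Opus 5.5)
Your proof is correct and follows the paper's argument. For ($\Leftarrow$), $X_\phi\cap u$ is the intersection of two downward-closed families and hence is again a magma. For ($\Rightarrow$), any magma containing $x$ (you take $u=pr(x)$) forces $pr(x)\subseteq X_\phi\cap u$. The differences are minor: you argue ($\Rightarrow$) directly rather than by contrapositive, and you spell out the level bookkeeping (Proposition \ref{P:basics}(ii), Corollary \ref{C:identical}, and the vacuous case $u\in M_1$) that the paper leaves implicit when it treats ``magma'' as ``nonempty downward-closed set''.
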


{\em Proof.} Assume the condition is true and $u\in M$. We have to show that $u\cap X_{\phi}$ is a magma, i.e., $x\in u\cap X_{\phi}$ implies $pr(x)\subseteq u\cap X_{\phi}$. Pick such a $x\in u\cap X_{\phi}$.  Now $x\in u$ implies $pr(x)\subseteq u$  since $u$ is already a magma, while by our condition $x\in X_{\phi}$ implies $pr(x)\subseteq  X_{\phi}$. So $pr(x)\subseteq u\cap X_{\phi}$, i.e., $u\cap X_{\phi}$ is a magma, so {\em Sep} holds for ${\phi}$. For the converse,  assume that the condition fails, i.e., there is a $x\in X_{\phi}$ and a $y\subseteq x$ such that $y\notin X_{\phi}$. Fix such magmas  $x$ and $y\subseteq x$. Now clearly there is always a magma $u$ such that $x\in u$. Then  $x\in u\cap X_{\phi}$ while $y\notin u\cap X_{\phi}$. It follows that $u\cap X_{\phi}$ is not a magma and  {\em Sep} fails in $M$. \telos

\vskip 0.2in

Lemma \ref{L:iff} leads to the following:

\begin{Def} \label{D:magclass}
{\em A class $X\subseteq M$ is said to be} magmatic {\em if it satisfies condition (\ref{E:condition}). Accordingly, a formula $\phi(x)$ of $L$ is said to be} magmatic {\em if $X_{\phi}$ is magmatic, i.e.,
$$M\models (\forall x,y)(\phi(x) \wedge y\subseteq x \rightarrow \phi(y)).$$}
\end{Def}

The following  is straightforward.

\begin{Cor} \label{C:straight}
A definable class $X\subseteq M$ is magmatic if and only if it is defined by a magmatic formula.
\end{Cor}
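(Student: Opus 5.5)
This is essentially an unwinding of Definition \ref{D:magclass}, and the plan is to prove the two directions separately.

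\emph{Forward direction.} Suppose $X\subseteq M$ is definable, say $X=X_\psi$ for some formula $\psi(x)$ of $L$, possibly with parameters, and suppose $X$ is magmatic. Then $X$ satisfies condition (\ref{E:condition}). Since $X=X_\psi=\{x: M\models\psi(x)\}$, condition (\ref{E:condition}) for $X$ translates verbatim into
$$M\models(\forall x,y)(\psi(x)\wedge y\subseteq x\rightarrow \psi(y)).$$
Here we use that $y\subseteq x$ is expressed in $L$ by $(\forall z)(z\in y\rightarrow z\in x)$. The only point to check is that this expression is interpreted correctly in $M$. For $x,y\in M\backslash M_1$ this holds because $M$ is almost transitive (Proposition \ref{P:basics}(iii)), so that $x,y\subseteq M$. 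For elements of $M_1$ the quantifier should be read in $A\cup M$, or with $\subseteq$ taken as a primitive relation of $M$, as the paper implicitly does throughout. With that reading, $\psi$ is a magmatic formula defining $X$.

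\emph{Converse direction.} Suppose $X=X_\phi$ for a magmatic formula $\phi$. By Definition \ref{D:magclass}, $\phi$ is magmatic precisely when $X_\phi$ is magmatic. Hence $X$ is magmatic.

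The only potential obstacle is the interpretation of $\subseteq$ just mentioned. I would handle it with a one-line remark rather than a real argument. Everything else is immediate from Definition \ref{D:magclass}, and Lemma \ref{L:iff} then yields, as an aside, that Sep holds for exactly the magmatic formulas.
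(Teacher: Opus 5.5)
Your proof is correct and matches the paper, which gives no argument beyond calling the corollary straightforward: by Definition \ref{D:magclass} a formula $\phi$ is magmatic exactly when the class $X_\phi$ is magmatic, so both directions are immediate. Your worry about how $y\subseteq x$ is read for elements of $M_1$ is a fair point about the paper's ``i.e.''\ clause, but the proof does not need it, since the definition already makes magmaticity of $\phi$ a property of $X_\phi$.
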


There is a simple way to generate all magmatic classes/formulas from the classes of all definable classes/formulas, respectively.  This is done by the following completion procedure.

Given a class $X\subseteq M$ the (magmatic) {\em completion of $X$} is the class
$$\overline{X}=\{x\in M:(\exists y)(y\in X \wedge x\subseteq y)\}.$$
Analogously, given a formula $\phi(x)$ of $L$ in the free variable $x$, possibly with parameters, the (magmatic) {\em  completion of $\phi(x)$} is the formula
$$\overline{\phi}(x):=(\exists y)(\phi(y) \wedge x\subseteq y).$$
The following is easy.

\begin{Fac} \label{F:basics}
For every $\phi$, $\overline{X}_\phi=X_{\overline{\phi}}$.
\end{Fac}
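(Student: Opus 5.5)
The plan is to unfold both sides of the equality $\overline{X}_\phi=X_{\overline{\phi}}$ into their defining conditions and check that these conditions coincide for every $x\in M$. Both classes are subclasses of $M$: $\overline{X}_\phi$ by the definition of completion, and $X_{\overline{\phi}}$ because it is defined as $\{x:M\models\overline{\phi}(x)\}$. It therefore suffices to fix an arbitrary $x\in M$ and show that $x\in\overline{X}_\phi$ if and only if $M\models\overline{\phi}(x)$.

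First I unfold the left-hand side. By the definition of completion applied to $X=X_\phi$, $x\in\overline{X}_\phi$ means that there is a $y\in X_\phi$ with $x\subseteq y$. Since $X_\phi=\{y:M\models\phi(y)\}$, this amounts to: there is $y\in M$ with $M\models\phi(y)$ and $x\subseteq y$.

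Next I unfold the right-hand side. By definition, $x\in X_{\overline{\phi}}$ means $M\models(\exists y)(\phi(y)\wedge x\subseteq y)$. By Tarski's clauses for truth in $\langle M,\in\rangle$, this holds iff there is $y\in M$ such that $M\models\phi(y)$ and $M\models x\subseteq y$. This is exactly the condition obtained for the left-hand side, provided the formula $x\subseteq y$ is interpreted in $M$ as real inclusion.

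The only step that needs care is this absoluteness of $\subseteq$: $x\subseteq y$ is the $L$-formula $(\forall z)(z\in x\rightarrow z\in y)$, and its quantifier ranges only over $M$. For $x,y\in M\backslash M_1$ this causes no trouble, since by Proposition~\ref{P:basics}(iii) their elements lie in $M$. For elements of $M_1$, whose members are atoms, I would work in $\langle A\cup M,\in\rangle$ as in Fact~\ref{DefOrdPair}. Alternatively, I would read $\subseteq$ as the external relation, as the paper does throughout, e.g.\ in Definition~\ref{D:magclass}. With that convention the two conditions coincide verbatim, and the equality follows by extensionality.
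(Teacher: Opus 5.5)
Your proposal is correct and matches the paper's proof, which unfolds the two definitions through the intermediate condition $(\exists y)(M\models\phi(y)\wedge x\subseteq y)$, exactly as you do. Your remark on the absoluteness of $\subseteq$ picks up a real subtlety the paper passes over by reading $\subseteq$ externally. For $x\in M_1$ the members are atoms, so in $\langle M,\in\rangle$ the inclusion $x\subseteq y$ would hold vacuously. Either of your two fixes, working in $\langle A\cup M,\in\rangle$ or reading $\subseteq$ externally, handles it.
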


{\em Proof.}  By the definitions above we have
$$x\in \overline{X}_\phi \Leftrightarrow (\exists y)(y\in X_\phi \wedge x\subseteq y)\Leftrightarrow(\exists y)(M\models \phi(y) \wedge x\subseteq y)\Leftrightarrow x\in X_{\overline{\phi}}.$$ \telos

The {\em Magmatic Separation Scheme} (MSS) is the following claim:

\vskip 0.1in

$\rm (MSS)$ \ \  \ For every magmatic formula $\phi(x)$,

\hskip 0.6in $(\forall u)(\exists v)[X_{\phi}\cap u\neq \emptyset \rightarrow X_{\phi}\cap u=v]$.

\vskip 0.1in

\begin{Prop} \label{P:true}
The {\rm MSS} is true in $M$.
\end{Prop}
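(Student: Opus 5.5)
The plan is to reduce Proposition \ref{P:true} directly to Lemma \ref{L:iff}, since the magmatic formulas are precisely those for which condition (\ref{E:condition}) holds. Fix a magmatic formula $\phi(x)$, possibly with parameters, and a magma $u\in M$ with $X_\phi\cap u\neq\emptyset$. I must exhibit $v\in M$ with $v=X_\phi\cap u$, i.e. show that the set $X_\phi\cap u$ (a set in $V(A)$ by ordinary Separation in ${\rm ZFA}$, being a subset of $u$) belongs to $M$.

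The key step is to verify membership in $M$ via the characterization of Proposition \ref{P:basics} (ii). First treat the case $u\in M\backslash M_1$, say $u\in M_{\alpha+1}$ with $\alpha\geq 1$. Then $X_\phi\cap u\subseteq u\subseteq M_\alpha$ and it is nonempty. For any $y\in X_\phi\cap u$, I need ${\cal P}(y)\cap M_\alpha\subseteq X_\phi\cap u$. Take $z\in{\cal P}(y)\cap M_\alpha$. Since $u$ is a magma and $y\in u$, we have $pr(y)={\cal P}(y)\cap M\subseteq u$ by (\ref{E:relPow}), so $z\in u$. Since $\phi$ is magmatic, $\phi(y)$ and $z\subseteq y$ give $\phi(z)$, so $z\in X_\phi$. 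Hence $X_\phi\cap u$ is a nonempty open subset of $M_\alpha$, i.e. an element of $M_{\alpha+1}\subseteq M$. This is exactly the argument of the ``if'' direction of Lemma \ref{L:iff}, and I would simply cite it at this point.

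The only point needing care, which I expect to be the main (if minor) obstacle, is the case $u\in M_1$. Here the elements of $u$ are atoms, while $X_\phi\subseteq M$ and $A\cap M=\emptyset$, so $X_\phi\cap u=\emptyset$ and the hypothesis of the MSS instance is false. The implication then holds vacuously (taking $v=u$, say). Alternatively, if one reads $X_\phi$ as a subclass of $A\cup M$, then the same openness argument works in $LO(A,\preccurlyeq)$, since downward closure under $\preccurlyeq$ is inherited by the intersection. Putting the two cases together yields the required $v$ for every $u$, and so MSS holds in $M$.
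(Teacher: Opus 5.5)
Your proposal is correct and takes essentially the same route as the paper. Both $X_\phi$ (being magmatic) and the magma $u$ are $\subseteq$-downward closed, so $X_\phi\cap u$ is a nonempty downward-closed subset of $u$; this is exactly the ``if'' direction of Lemma \ref{L:iff}. Your explicit check of membership in $M_{\alpha+1}$ via Proposition \ref{P:basics}(ii), and your separate vacuous treatment of $u\in M_1$ (where $X_\phi\cap u=\emptyset$ because $A\cap M=\emptyset$), are details the paper's proof leaves implicit.
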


{\em Proof.} Let $\phi(x)$ be a magmatic formula, $X_\phi=\{x\in M:M\models\phi(x)\}$ and let $u\in M$ such that $X_\phi\cap u\neq\emptyset$. We have to show that $X_\phi\cap u\in M$. We have seen that $\phi$ is magmatic implies $X_\phi$ is a  magmatic class, i.e., for every $x\in X_\phi$ and  $y\subseteq x$, $y\in X_\phi$. Let $X_\phi\cap u=v$.  It suffices to show that $x\in v$ and $y\subseteq x$, implies $y\in v$. But the latter implication holds for both $X_\phi$ and $u$, since $X_\phi$ is magmatic and $u$ is a magma, so indeed $y\in v$. \telos

\vskip 0.2in

In contrast to Separation, Replacement does not seem  to admit an analogous adjustment that would make it true in $M$, and the reason is essentially its  {\em functional} character and the problems that functions present when  represented inside $M$, as we saw in section 4.1.

In  ZF Replacement, like Separation, can also be formulated  in terms of definable classes of ordered pairs.  Call a formula $\phi(x,y)$  {\em functional}, if it has the property  $(\forall x)(\exists! y)(\phi(x,y)$. The corresponding class defined by $\phi(x,y)$, $F=\{\langle x,y\rangle:\phi(x,y)\}$, is called functional too, and  we can write $F(x)=y$ instead of $\phi(x,y)$ or $\langle x,y\rangle\in F$.
Then the {\rm ZF}-Replacement scheme says that for every functional class $F$ which is definable in our universe $V$, and every set  $u\in V$, the image-collection $\{F(x):x\in u\}=\{y:(\exists x)(x\in u \wedge \langle x,y\rangle\in F\}$ of $u$ under $F$ is also a set of $V$.

The  corresponding definition of functional relations and formulas in $M$  reads similarly, but their extension cannot be represented by classes of magmatic pairs $\langle\langle x,y\rangle\rangle$. For example a natural functional relation  defined in $M$ is $pr(x)=y$, as well as its iterates $pr^2(x)$, $pr^3(x)$ etc. But if we try to represent this operation by the corresponding class of magmatic pairs $X=\{\langle x,y\rangle\rangle:pr(x)=y\}$, then $X$ is not magmatic, and also for any magma $u$, the ``restriction'' of $pr$ to $u$,  $v=\{\langle\langle x,y\rangle\rangle: pr(x)=y \wedge x\in u\}$ is  not a magma. This is obvious from the fact that every $\langle\langle x,y\rangle\rangle$ contains  submagmas $w$ which are not pairs, so $w$  cannot belong to $v$. But even if we confine ourselves  only to submagmas of $\langle\langle x,y\rangle\rangle$ which are pairs,  then for all $x'\subset x$,  $\langle\langle x',y\rangle\rangle\subseteq \langle\langle x,y\rangle\rangle$ but   $\langle\langle x',y\rangle\rangle\notin v$, since $pr(x')\neq y$.

Nevertheless, even without the possibility to represent functional relations by magmatic pairs, the question  remains whether the image of a magma under such a relation is a magma. The answer is negative. A trivial example is by taking the relation $\phi(x,y):(x=x \wedge y=y_0$), for a parameter $y_0$, which defines the constant class function $F(x)=y_0$ for all $x$. Then for every magma $u$ the image of $u$ under $F$ is $\{y_0\}$, which is not a magma. A less trivial example is given by the operation $pr$.

\begin{Ex} \label{counter}
Replacement fails in $M$ for the functional relation $pr(x)=y$.
\end{Ex}

{\em Proof.} Assume the contrary, i.e., given a magma $u$, the set $v=\{pr(x):x\in u\}$ is a magma. It means that for every $x\in u$, $pr(pr(x))\subseteq v$. We  shall see that this is false. Fix a $x\in u$. Then for every $z\in pr(pr(x))$, i.e., $z\subseteq pr(x)$, we must have $z\in v$, i.e., $z=pr(y)$ for some $y\in u$. Now in fact {\em some}  magmas $z\subseteq pr(x)$ do have the form $z=pr(y)$ for some $y\in u$, because $u$ is a magma and when $x\in u$ then for every $y\subseteq x$, $y\in u$ too, and $pr(y)\subseteq pr(x)$, due to the monotonicity of $pr$.  However not all $z\subseteq pr(x)$ are of this kind. Specifically, take distinct submagmas $x_1,\ldots, x_n$ of $x$. Then $pr(x_i)\subseteq pr(x)$ for each $i=1,\ldots,n$, and hence  $pr(x_1)\cup\cdots \cup pr(x_n)$ is also a submagma of $pr(x)$, which obviously is not of the form $pr(y)$. \telos

\vskip 0.2in

The only remaining option seems to be  to restrict the defining formulas involved in the $Rep$ scheme. Such restrictions  could not  be different from those applied to the formulas  used in the case of Separation. Namely, given  a functional relation defined  by a formula $\phi(x,y)$, to try some kind of ``magmatic completion'' of $\phi$  analogous to the one defined in Corollary \ref{C:straight} above, say
$$\overline{\phi}(x,y):=(\exists x',y')(\phi(x',y') \wedge \langle\langle x,y\rangle\rangle\subseteq \langle\langle x',y'\rangle\rangle).$$
The problem however is that  $\overline{\phi}(x,y)$ defines  no more a functional relation. For if $\overline{\phi}(x,y)$ holds for two elements $x,y$, then it follows easily from  the definition of $\overline{\phi}$ and the properties  of magmatic pairs that  for {\em any} $y_1\subseteq y$ $\overline{\phi}(x,y_1)$ holds too,  so   $\overline{\phi}$ is not functional.

\end{document}